\numberwithin{equation}{section}
\newtheorem{thm}{Theorem}[section]
\newtheorem{lem}{Lemma}[section]
\newtheorem{prop}{Proposition}[section]
\theoremstyle{definition}
\newtheorem{defn}{Definition}[section]
\theoremstyle{remark}
\newtheorem{rem}{Remark}[section]
\title{Global existence in critical spaces for   incompressible viscoelastic fluids\thanks{This work is supported partially by NSFC No.10871175,
10931007, 10901137, Zhejiang Provincial Natural Science Foundation
of China Z6100217,  and SRFDP No. 20090101120005.}}
\author{Ting Zhang,\thanks{zhangting79@hotmail.com}
   Daoyuan Fang\thanks{dyf@zju.edu.cn} \\\textit{\small Department of Mathematics, Zhejiang University,
Hangzhou 310027, China}}
\date{}
\begin{document}

\maketitle

\begin{abstract}
We investigate local and global strong solutions for the
incompressible viscoelastic system of Oldroyd--B type. We obtain the
existence and uniqueness of a solution in a functional setting
invariant by the scaling of the associated equations. More
precisely, the initial velocity has the same critical regularity
index as for the incompressible Navier--Stokes equations, and one
more derivative is needed for the deformation tensor. We point out a
smoothing effect on the velocity and a $L^1-$decay on the difference
between the deformation tensor and the identity matrix. Our result
implies that the deformation tensor $F$ has the same regularity as
the density of the compressible Navier--Stokes equations.
\end{abstract}

\section{Introduction}
In this paper, we consider the following system describing
incompressible viscoelastic fluids.
\begin{equation}
  \left\{\begin{array}{l}
    \nabla \cdot v=0, \ \ x\in \mathbb{R}^N,\ N\geq 2,\\
        v_t+v\cdot \nabla v+\nabla p=\mu\Delta v+\nabla\cdot\left[
        \frac{\partial W(F)}{\partial F}F^\top
        \right],\\
            F_t+v\cdot \nabla F=\nabla vF,\\
             F(0,x)=I+E_0(x),
   \ v(0,x)=v_0(x).
  \end{array}
  \right.\label{vis-E1.1}
\end{equation}
Here, $v$, $p$, $\mu>0$, $F$ and $W(F)$ denote, respectively, the
velocity field of materials, pressure, viscosity, deformation tensor
and elastic energy functional.  The third equation is simply the
consequence of the chain law. It can also be regarded as the
consistence condition of the flow trajectories obtained from the
velocity field $v$ and also of those obtained from the deformation
tensor $F$  (\cite{Dafermos,Gurtin,Lei,Lin,Liu}). Moreover, on the
right-hand side of the momentum equation, $\frac{\partial
W(F)}{\partial F}$ is the Piola--Kirchhoff stress tensor and
$\frac{\partial W(F)}{\partial F}F^\top$ is the Cauchy--Green
tensor. The latter is the change variable (from Lagrangian to
Eulerian coordinates) form of the former one (\cite{Lei}). The above
system is equivalent to the usual Oldroyd--B model for viscoelastic
fluids in the case of infinite Weissenberg number (\cite{Larson}). On
the other hand, without the viscosity term, it represents exactly
the incompressible elasticity in Eulerian coordinates. We refer to
 \cite{Byron,Dafermos,Larson,Lin2,Lions,Liu} and their references for
the detailed derivation and physical background of the above system.

Throughout this paper, we will use the notations of
    $$
    (\nabla v)_{ij}=\frac{\partial v_i}{\partial x_j},
     \ (\nabla v F)_{ij}=(\nabla v)_{ik}F_{kj},
     \ (\nabla\cdot F)_{i}=\partial_j F_{ij},
    $$
and summation over repeated indices will always be understood.

For incompressible viscoelastic fluids, Lin et al.\cite{Lin} proved
the global existence of classical small solutions for the
two--dimensional case with the initial data $v_0,E_0=F_0-I\in
H^k(\mathbb{R}^2)$, $k\geq2$, by introducing an auxiliary vector
field to replace the transport variable $F$. Using the method in
 \cite{Kawashima} for the damped wave equation, they\cite{Lin} also
obtained the global existence of classical small solutions for the
three--dimensional case with the initial data $v_0,E_0\in
H^k(\mathbb{R}^3)$, $k\geq3$. Lei and Zhou\cite{Lei3} obtained the
same results for the two--dimensional case with the initial data
$v_0,E_0\in H^s(\mathbb{R}^2)$, $s\geq4$. via the incompressible
limit working directly on the deformation tensor $F$. Then, Lei et
al.\cite{Lei2} proved the global existence for two--dimensional
small-strain viscoelasticity with $H^2(\mathbb{R}^2)$ initial data
and without assumptions on the smallness of the rotational part of
the initial deformation tensor.  It is worth noticing that the
global existence and uniqueness for the large solution of the
two--dimensional problem is still open.  Recently, by introducing an
auxiliary function $w=\Delta v+\frac{1}{\mu}\nabla\cdot E$, Lei et
al.\cite{Lei} obtained a weak dissipation on the deformation $F$
and the global existence of classical small solutions to
$N$--dimensional system with the initial data $v_0,E_0\in
H^2(\mathbb{R}^N)$ and $N=2,3$. All these results need that the
initial data $v_0$ and $E_0$ have the same regularity, and the
regularity index of the initial velocity $v_0$ is bigger than the
critical regularity index for the classical incompressible
Navier--Stokes equations.  In this paper, using Danchin's method to
study the compressible Navier--Stokes system, we are concerned with
the existence and uniqueness of a solution for the initial data in a
functional space with minimal regularity order. Although
 the equation (\ref{vis-E1.1})$_3$ for the deformation tensor $F$ is identical to the equation for
the vorticity $\omega=\nabla\times v$ of the Euler equations
    $$
    \partial_t \omega+  v\cdot \nabla \omega=\nabla v\omega,
    $$
our result implies that the deformation tensor $F$ has the same
regularity as the density of the compressible Navier--Stokes
equations.

 At this stage, we will use scaling considerations for
 (\ref{vis-E1.1}) to guess which spaces may be critical. We observe
  that  (\ref{vis-E1.1}) is invariant by the transformation
    $$
    (v_0(x), F_0(x))\rightarrow (lv_0(lx),F_0(lx)),
    $$
        $$
(v(t,x), F(t,x),P(t,x))\rightarrow
(lv(l^2t,lx),F(l^2t,lx),l^2P(l^2t,lx)),
        $$
up to a change of the elastic energy functional $W$ into $l^2W$.

\begin{defn}\label{vis-D1.1}
  A functional space $E\subset (\mathcal{S}'(\mathbb{R}^N))^N\times (
  \mathcal{S}'(\mathbb{R}^N)^{N\times N}$ is called
a critical space if the associated norm is invariant under the
transformation $
    (v(x), F(x))\rightarrow (lv(lx),F(lx))
    $ (up to a constant
independent of $l$).
\end{defn}

Obviously
$(\dot{H}^{\frac{N}{2}-1})^N\times(\dot{H}^{\frac{N}{2}})^{N\times
N}$ is a critical space for the initial data. The space
$\dot{H}^{\frac{N}{2}}$ however is not included in $L^\infty$,   we
cannot expect to get $L^\infty$ control on the deformation tensor,
when we choose $(F_0-I)\in (\dot{H}^{\frac{N}{2}})^{N\times N}$.
Moreover, the product between functions does not extend continuously
from $\dot{H}^{\frac{N}{2}-1}\times\dot{H}^{\frac{N}{2}}$ to
$\dot{H}^{\frac{N}{2}-1}$, so that we will run into difficulties
when estimating the nonlinear terms. Similar to the compressible
Navier--Stokes system (\cite{Danchin}), we could use homogeneous Besov
spaces $B^s:=\dot{B}^s_{2 ,1}(\mathbb{R}^N)$ (refer to Sect.
\ref{vis-S3} for the definition of such spaces) with the same
derivative index. Now, $B^\frac{N}{2}$ is an algebra embedded in
$L^\infty$. This allows us to control the deformation tensor from
above without requiring more regularity on derivatives of $F$.
Moreover, the product is continuous from
$B^{\frac{N}{2}-\alpha}\times B^\frac{ N}{2}$ to
$B^{\frac{N}{2}-\alpha} $ for $0 \leq\alpha<N$, and
$(B^{\frac{N}{2}-1})^N\times(B^{\frac{N}{2}})^{N\times N}$ is a
critical space.

In this paper, we  assume that $E_0:=F_0-I$ and $v_0$ satisfy the
following constraints:
\begin{equation}
    \nabla \cdot v_0=0,
     \  \mathrm{det}(I+E_0)=1,
   \    \nabla\cdot E^\top_0=0,\label{vis-E1.3-0}
       \end{equation}
and
\begin{equation}
       \partial_m E_{0ij}-\partial_jE_{0im}
       =E_{0lj}\partial_lE_{0im}-E_{0lm}\partial_lE_{0ij}.
  \label{vis-E1.3}
\end{equation}
The first three of these expressions are just the consequences of
the incompressibility condition  and the last one can be understood
as the consistency condition for changing variables between the
Lagrangian and Eulerian coordinates \cite{Lei}.

For simplicity, we only consider the case of Hookean elastic
materials: $W(F)=\frac{1}{2}|F|^2=\frac{1}{2}\mathrm{tr}(FF^\top)$. Define the usual strain tensor by the form
    \begin{equation}
      E=F-I.
    \end{equation}
Then, the system (\ref{vis-E1.1}) is
\begin{equation}
  \left\{\begin{array}{l}
    \nabla \cdot v=0, \ \ x\in \mathbb{R}^N,\ N\geq 2,\\
        v_{it}+v\cdot \nabla v_i+\partial_i p=\mu\Delta v_i+ E_{jk}\partial_j E_{ik}+\partial_jE_{ij},\\
            E_t+v\cdot \nabla E=\nabla vE+\nabla v,\\
                (v,E)(0,x)=(v_0,E_0)(x).
  \end{array}
  \right.\label{vis-E1.4}
\end{equation}

Let us now state our main results. Define the following functional
space:
$$
    U^s_T=\left(L^1([0,T]; B^{s+1})\cap C([0,T];   B^{s-1})
    \right)^N\times \left( C([0,T]; B^s)
    \right)^{N\times N},$$
$$
    V^s= \left(L^1(\mathbb{R}^+; B^{s+1})\cap C(\mathbb{R}^+;   B^{s-1})
    \right)^N\times \left(L^2(\mathbb{R}^+; B^s)\cap C(\mathbb{R}^+; B^s\cap B^{s-1})
    \right)^{N\times N}.$$

\begin{thm}[Local result]\label{vis-T1.1}
    Suppose that the
   initial data satisfies the incompressible constraints (\ref{vis-E1.3-0}) and
 $E_0\in B^\frac{N}{2}$, $v_0\in
    B^{\frac{N}{2}-1}$.
    Then the following results hold true:
    \begin{description}
      \item[1)]  There exist $T>0$ and a unique local solution for system (\ref{vis-E1.4}) that satisfies
        $$
  (v,E)\in  \left(L^1([0,T]; B^{\frac{N}{2}+1})\cap C([0,T];   B^{\frac{N}{2}-1})
    \right)^N\times \left( C([0,T]; B^{\frac{N}{2}})
    \right)^{N\times N},
        $$
    \begin{equation}
      \|(v,E)\|_{U^\frac{N}{2}_T}\leq C(\|E_0\|_{B^\frac{N}{2}}+\|v_0\|_{B^{\frac{N}{2}-1}}),
    \end{equation}
  and
    \begin{equation}
       \nabla \cdot v =0,
     \  \mathrm{det}(I+E )=1,
   \    \nabla\cdot E^\top =0.
    \end{equation}
        \item[2)]  Moreover, if $E_0\in B^s$ and $u_0\in B^{s-1}$,
$s\in(\frac{N}{2},\frac{N}{2}+1)$, then
 \begin{equation}
      \|(v,E)\|_{U^s_T}
      \leq C(\|E_0\|_{B^s}+\|v_0\|_{B^{s-1}}).\label{vis-E1.8}
    \end{equation}
    \end{description}
\end{thm}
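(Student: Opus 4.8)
The plan is to recast the system~(\ref{vis-E1.4}) as a heat equation for $v$ coupled to a transport equation for $E$, and to solve it by a fixed-point argument in the critical space, closing every estimate with parabolic smoothing, the Besov transport inequality, and the product laws recalled just before the statement. First I would apply the Leray projector $\mathbb{P}$ onto divergence-free vector fields to the momentum equation, eliminating $\nabla p$ and turning it into $\partial_t v-\mu\Delta v=\mathbb{P}(-v\cdot\nabla v+E_{jk}\partial_jE_{ik}+\nabla\cdot E)$, while $E$ retains its transport form $\partial_t E+v\cdot\nabla E=\nabla v\,E+\nabla v$. The two linear tools are: (i) maximal regularity of the heat semigroup, giving for $\partial_t v-\mu\Delta v=f$ with $v(0)=v_0$ the bound $\|v\|_{L^1_T B^{\frac N2+1}}+\|v\|_{L^\infty_T B^{\frac N2-1}}\lesssim\|v_0\|_{B^{\frac N2-1}}+\|f\|_{L^1_T B^{\frac N2-1}}$, a gain of two derivatives in the time-integrated norm; and (ii) the Besov transport estimate, available since the index $\frac N2$ lies in the admissible range $s\le\frac N2+1$, of the form $\|E\|_{L^\infty_T B^{\frac N2}}\lesssim e^{C\int_0^T\|\nabla v\|_{B^{N/2}}\,dt}(\|E_0\|_{B^{\frac N2}}+\|g\|_{L^1_T B^{\frac N2}})$ for $\partial_tE+v\cdot\nabla E=g$, which needs precisely $\nabla v\in L^1_T B^{\frac N2}$, i.e.\ $v\in L^1_T B^{\frac N2+1}$. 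Together with the algebra property of $B^{\frac N2}$ and the product law $B^{\frac N2-\alpha}\times B^{\frac N2}\to B^{\frac N2-\alpha}$, these handle the coupling and the nonlinear terms: $\nabla\cdot E$ and $E_{jk}\partial_jE_{ik}$ land in $L^1_T B^{\frac N2-1}$ with a factor $T$ (since $E\in L^\infty_T B^{\frac N2}$ forces $\nabla E\in L^1_T B^{\frac N2-1}$ with the small factor $T$), and $\nabla v\,E$ lands in $L^1_T B^{\frac N2}$.

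The delicate term is the Navier--Stokes nonlinearity $v\cdot\nabla v=\nabla\cdot(v\otimes v)$, whose $L^1_T B^{\frac N2-1}$ norm is controlled by $\|v\|_{L^2_T B^{N/2}}^2\lesssim\|v\|_{L^\infty_T B^{\frac N2-1}}\|v\|_{L^1_T B^{\frac N2+1}}$ and carries no factor of $T$. To close the iteration for arbitrary (not necessarily small) data I would use the Fujita--Kato--Chemin device: write $v=v_L+w$ with $v_L=e^{t\mu\Delta}v_0$, note that $\|v_L\|_{L^2_T B^{N/2}}\to0$ and $\|v_L\|_{L^1_T B^{N/2+1}}\to0$ as $T\to0$ by dominated convergence, and solve the fixed point for the pair $(w,E)$ with $w(0)=0$ inside a ball on which $\|w\|_{L^1_T B^{N/2+1}\cap L^\infty_T B^{N/2-1}}$ is \emph{small} while $\|E\|_{L^\infty_T B^{N/2}}$ is merely bounded by $\approx\|E_0\|_{B^{N/2}}$. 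Every quadratic velocity contribution then splits into pieces that are small because they involve either the small free evolution $v_L$ or the small remainder $w$, whereas the $E$-dependent terms ($\nabla\cdot E$, $E\cdot\nabla E$, and the transport source $\nabla v$) come with a factor $T$ or with the small $\|w\|$; choosing $T$ small makes the map a contraction, producing a unique fixed point on $[0,T]$ and the stated bound. The constraints $\nabla\cdot v=0$, $\det(I+E)=1$, $\nabla\cdot E^\top=0$ are then propagated by checking that, thanks to (\ref{vis-E1.3-0})--(\ref{vis-E1.3}), each of them satisfies a homogeneous transport equation along $v$ and hence stays identically zero.

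I expect the main obstacle to be \textbf{uniqueness at the critical level}. For two solutions with the same data the differences $\delta v,\delta E$ satisfy the linearized system, but estimating $\delta E$ directly in $B^{\frac N2}$ fails: its transport source $\delta v\cdot\nabla E_2$ only lies in $B^{\frac N2-1}$ (because $\nabla E_2\in B^{\frac N2-1}$), so one loses a derivative. The remedy is to measure the differences one derivative lower, $\delta E\in L^\infty_T B^{\frac N2-1}$ and $\delta v\in L^\infty_T B^{\frac N2-2}\cap L^1_T B^{\frac N2}$ (hence $\delta v\in L^2_T B^{\frac N2-1}$ by interpolation). The genuinely borderline term is the product $\delta E\cdot\nabla E_1$ of two $B^{\frac N2-1}$ factors, which does not return to $B^{\frac N2-1}$; I would control it through a logarithmic interpolation inequality and close the resulting differential inequality by Osgood's lemma rather than Gronwall, yielding $\delta v\equiv\delta E\equiv0$.

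Finally, part~2) is a propagation-of-regularity statement. Assuming $E_0\in B^s$, $v_0\in B^{s-1}$ with $\frac N2<s<\frac N2+1$, I would rerun the heat and transport estimates in $B^{s-1}$ and $B^s$, using the \emph{tame} product bound $\|fg\|_{B^s}\lesssim\|f\|_{B^s}\|g\|_{B^{N/2}}+\|f\|_{B^{N/2}}\|g\|_{B^s}$, valid on this range, so that the high $B^s$ norm always appears linearly while the already-controlled critical $B^{\frac N2}$ norm sits only in the coefficients and in the exponential factor of the transport estimate. A Gronwall argument on $[0,T]$ then upgrades the solution already constructed in part~1) and yields (\ref{vis-E1.8}) on the same time interval.
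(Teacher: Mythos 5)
Your proposal is correct and follows essentially the same route as the paper: the decomposition $v=e^{t\mu\Delta}v_0+w$ with a remainder kept small on a short time interval, parabolic smoothing plus the Besov transport estimate and the product laws to close the a priori bounds (with the $E$-terms gaining a factor of $T$), uniqueness measured one derivative lower with a logarithmic interpolation inequality and an Osgood-type Gronwall lemma at the two-dimensional endpoint, and tame product estimates with Gronwall for the propagation of $B^s$ regularity in part 2). The only cosmetic differences are that the paper retains the pressure and invokes a convected Stokes estimate (Proposition \ref{vis-P4.2}) rather than applying the Leray projector, and constructs the solution by an iterative scheme with compactness rather than a contraction mapping; for $N\geq 3$ it also closes uniqueness with plain Gronwall, reserving the Osgood argument for $N=2$.
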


\begin{thm}[Global result]\label{vis-T1.2}
   Suppose that the
   initial data satisfies the incompressible constraints
   (\ref{vis-E1.3-0})--(\ref{vis-E1.3}),
    $E_0\in B^\frac{N}{2}\cap B^{\frac{N}{2}-1}$, $v_0\in   B^{\frac{N}{2}-1}$ and
        \begin{equation}
          \|E_0\|_{B^\frac{N}{2}\cap B^{\frac{N}{2}-1}}+\|v_0\|_{B^{\frac{N}{2}-1}}\leq \lambda,
        \end{equation}
        where $\lambda$ is a small positive constant.
Then the following results hold true:
    \begin{description}
      \item[1)]  There
  exists a unique global   solution for system (\ref{vis-E1.4}) that satisfies
    $$
v\in\left(L^1(\mathbb{R}^+; B^{\frac{N}{2}+1})\cap C(\mathbb{R}^+; B^{\frac{N}{2}-1})
    \right)^N ,
    $$
    $$
 E \in  \left(L^2(\mathbb{R}^+; B^\frac{N}{2})\cap C(\mathbb{R}^+; B^\frac{N}{2}\cap B^{\frac{N}{2}-1})
    \right)^{N\times N},
    $$
    \begin{equation}
      \|(v,E)\|_{V^\frac{N}{2}}\leq M(\|E_0\|_{B^\frac{N}{2}\cap B^{\frac{N}{2}-1}}+\|v_0\|_{B^{\frac{N}{2}-1}}).
    \end{equation}
    \item[2)] Moreover, if $E_0\in B^s$ and $u_0\in B^{s-1}$,
$s\in(\frac{N}{2},\frac{N}{2}+1)$, then
 \begin{equation}
      \|(v,E)\|_{V^s}
      \leq C(\|E_0\|_{B^s}+\|v_0\|_{B^{s-1}}).
    \end{equation}
    \end{description}
\end{thm}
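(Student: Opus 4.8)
The strategy is to obtain Theorem~\ref{vis-T1.2} from the local solution of Theorem~\ref{vis-T1.1} by means of a uniform-in-time a priori estimate in the space $V^{\frac N2}$ followed by a continuation argument. Writing $X(T)$ for the norm of $(v,E)$ in the analogue of $V^{\frac N2}$ over $[0,T]$, I would prove a Riccati-type inequality $X(T)\le C_0\bigl(\|E_0\|_{B^{\frac N2}\cap B^{\frac N2-1}}+\|v_0\|_{B^{\frac N2-1}}\bigr)+C_1X(T)^2$. Once the initial norm is $\le\lambda$ with $\lambda$ small enough that $4C_0C_1\lambda<1$, a standard bootstrap makes $X(T)$ bounded independently of $T$ by $M$ times the data; since the $U^{\frac N2}_T$-bound of Theorem~\ref{vis-T1.1} rules out finite-time blow-up of the controlling norms, the local solution then extends to all of $\mathbb{R}^+$ and the global bound follows.

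The heart of the matter is the a priori estimate, whose engine is the partially dissipative structure of the linear part of (\ref{vis-E1.4}). Dropping the nonlinearities, the equations $v_t-\mu\Delta v=\nabla\cdot E$ and $E_t=\nabla v$ combine, after applying the Leray projector $\mathbb{P}$ (which enforces $\nabla\cdot v=0$), into the damped wave equation $v_{tt}-\mu\Delta v_t-\Delta v=0$. Working on each Littlewood--Paley block $\dot\Delta_q$, the plain energy $\tfrac12\bigl(\|\dot\Delta_q v\|_{L^2}^2+\|\dot\Delta_q E\|_{L^2}^2\bigr)$ yields, once the linear coupling terms cancel, only the parabolic dissipation $\mu\|\nabla\dot\Delta_q v\|_{L^2}^2$ and no control on $E$. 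To manufacture dissipation for the deformation I would add a small multiple $\varepsilon$ of the cross functional $\langle \dot\Delta_q v,\nabla\cdot\dot\Delta_q E\rangle$, whose time derivative supplies a term comparable to $\|\nabla\cdot\dot\Delta_q E\|_{L^2}^2$ while its remaining contributions are absorbed into $\mu\|\nabla\dot\Delta_q v\|_{L^2}^2$ for $\varepsilon$ small. This produces a Lyapunov functional $\mathcal{L}_q^2\approx\|\dot\Delta_q v\|_{L^2}^2+\|\dot\Delta_q E\|_{L^2}^2$ obeying $\tfrac{d}{dt}\mathcal{L}_q^2+c\bigl(\|\nabla\dot\Delta_q v\|_{L^2}^2+\|\nabla\cdot\dot\Delta_q E\|_{L^2}^2\bigr)\le(\text{nonlinear})$.

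Two further ingredients turn this into the $V^{\frac N2}$ estimate. First, the constraints (\ref{vis-E1.3-0})--(\ref{vis-E1.3}) must be propagated by the flow; at the linear level they render each row of $E$ a gradient, $E_{ij}=\partial_j\phi_i$, whence $\|E\|_{B^{\frac N2}}\lesssim\|\nabla\cdot E\|_{B^{\frac N2-1}}$ (since $\|\nabla^2\phi\|=\|\Delta\phi\|$ in $L^2$-based spaces) up to quadratic corrections. Consequently the block dissipation on $\nabla\cdot E$ upgrades to dissipation on $\nabla E$, and summing the square roots in the $\ell^1$ fashion dictated by the $\dot B^s_{2,1}$-norm, with the dyadic weights corresponding to $B^{\frac N2-1}$ for $v$ and $B^{\frac N2}$ for $E$, gives exactly $v\in L^1_t B^{\frac N2+1}$ and $E\in L^2_t B^{\frac N2}$; the low, oscillatory frequencies are controlled by carrying along the $B^{\frac N2-1}$ norm of $E$, which is why $E_0\in B^{\frac N2-1}$ is required here but not in Theorem~\ref{vis-T1.1}. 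Second, the nonlinearities — the transport terms $v\cdot\nabla v$ and $v\cdot\nabla E$, the coupling $\nabla v\,E$ and the elastic stress $E_{jk}\partial_j E_{ik}$ — must be absorbed. I would bound them with the standard product and commutator estimates in Besov spaces (using that $B^{\frac N2}$ is an algebra in $L^\infty$ and that multiplication sends $B^{\frac N2-\alpha}\times B^{\frac N2}\to B^{\frac N2-\alpha}$), exploiting the decisive $L^2_t\times L^2_t\hookrightarrow L^1_t$ mechanism: two $E$-factors from $L^2(\mathbb{R}^+;B^{\frac N2})$ produce a force in $L^1(\mathbb{R}^+;B^{\frac N2-1})$, which is precisely the space in which the velocity equation is solved with a gain of two derivatives. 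Every nonlinear contribution is thus at least quadratic in $X(T)$, closing the Riccati inequality; part 2) follows verbatim with the weight $2^{qs}$ in place of $2^{q\frac N2}$.

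The principal difficulty, I expect, lies in the dissipation for $E$. Unlike the scalar density of compressible Navier--Stokes, where $\nabla\rho_\infty=0$ already forces $\rho_\infty=0$, here $\nabla\cdot E_\infty=0$ alone leaves a divergence-free residual, so the decay of $E$ genuinely hinges on propagating the full set of constraints (\ref{vis-E1.3-0})--(\ref{vis-E1.3}) and on verifying that, modulo quadratic errors, they eliminate this residual at every frequency. Calibrating the cross term, the constant $\varepsilon$, and the norms so that the Lyapunov estimate is uniform in $q$ across the regimes where the damped-wave symbol behaves parabolically (low frequencies) or splits into a heat-like and a fixed-rate mode (high frequencies) is the technical crux, as is checking that the merely $L^2_t$ (non-smoothing) dissipation on $E$ is exactly strong enough to control the quadratic stress $E_{jk}\partial_j E_{ik}$ — the place where both the smallness of $\lambda$ and the precise algebraic form of the elastic term are indispensable.
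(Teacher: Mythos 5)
Your proposal is correct and follows essentially the same route as the paper: the paper implements your block-wise Lyapunov functional with the cross term by passing to the unknown $c=\Lambda^{-1}\nabla\cdot E$ and invoking the Danchin-type estimate of Proposition \ref{vis-P4.3} in the hybrid norms $\tilde{B}^{s,r}_\mu$, recovers $E$ from $c$ up to quadratic corrections exactly through the propagated constraint (\ref{vis-E7.1}) together with (\ref{vis-E5.1})$_4$, and controls the stress $E_{jk}\partial_jE_{ik}$ via the interpolation $\|E\|_{L^2_T(B^{N/2})}^2\le\|E\|_{L^\infty_T(\tilde{B}^{N/2,\infty}_\mu)}\|E\|_{L^1_T(\tilde{B}^{N/2,1}_\mu)}$, which is your $L^2_t\times L^2_t\hookrightarrow L^1_t$ mechanism. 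The bootstrap and continuation argument closing the Riccati-type inequality is likewise the paper's Claim 1.
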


\begin{rem}
  The $L^2$-decay in time for $E$ is a key point in the proof of
the global existence. We shall also get a $L^1-$decay in a space a
trifle larger than $B^\frac{ N}{2}$ (see Theorem \ref{vis-T6.1}
below).
\end{rem}

\begin{rem}\label{vis-r1.2}
  Theorem \ref{vis-T1.2} implies that the deformation tensor $F$ has similar
  property as the density of the compressible Navier--Stokes system
in   \cite{Danchin}. And we think that the
incompressible viscoelastic system is similar to the  compressible
Navier--Stokes system.
\end{rem}

\begin{rem}
Similar to the  compressible
  Navier--Stokes system in  \cite{Danchin},  the initial data do not
really belong to a critical space in the sense of Definition
\ref{vis-D1.1}. We indeed made the additional assumption $E_0\in
B^{\frac{ N}{2}- 1}$ (which however involves only low frequencies
and does not change the required local regularity for $E_0$). On the
other hand, our scaling considerations do not take care of the
  Cauchy--Green tensor term. A careful study of the linearized system (see
 Proposition
\ref{vis-P4.3} below) besides indicates that such an assumption may
be unavoidable.
\end{rem}
\begin{rem}
Considering the general viscoelastic model (\ref{vis-E1.1}), if the
strain energy function satisfies the strong Legendre--Hadamard
ellipticity condition
    \begin{equation}
    \frac{\partial^2W(I)}{\partial F_{il}\partial F_{jm}}=(\alpha^2-2\beta^2)\delta_{il}\delta{jm}+\beta^2(\delta_{im}\delta_{jl}
    +\delta_{ij}\delta_{lm}),
     \textrm{ with }\alpha>\beta>0,
    \end{equation}
 and the reference configuration stress--free condition
    \begin{equation}
    \frac{\partial W(I)}{\partial F}=0,
    \end{equation}
then we can obtain the same results as that in Theorems
\ref{vis-T1.1}--\ref{vis-T1.2}.
\end{rem}

In this paper, we introduce the following function:
    $$
    c=\Lambda^{-1}{ \nabla\cdot E},
    $$ where $\Lambda^{s} f=  \mathcal{F}^{-1} (|\xi|^s \hat{f})$.
Then, the system (\ref{vis-E1.4}) reads
    \begin{equation}
  \left\{\begin{array}{l}
    \nabla \cdot v=\nabla \cdot c=0, \ \ x\in \mathbb{R}^N,\ N\geq2,\\
        v_{it}+v\cdot \nabla v_i+\partial_i p=\mu\Delta v_i+ E_{jk}\partial_j E_{ik}+\Lambda c_i,\\
            c_t+v\cdot \nabla c +[\Lambda^{-1}\nabla\cdot, v\cdot ]\nabla E=\Lambda^{-1} \nabla\cdot(\nabla vE)-\Lambda v,\\
                     \Delta E_{ij}= \Lambda\partial_j c_i+\partial_k(\partial_k E_{ij}-\partial_jE_{ik }),\\
                (v,c)(0,x)=(v_0,\Lambda^{-1}\nabla\cdot E_0)(x).
  \end{array}
  \right.\label{vis-E1.14}
\end{equation}
So, we need to study the following mixed parabolic--hyperbolic
linear system with a convection term:
    \begin{equation}
  \left\{\begin{array}{l}
    \nabla \cdot v=\nabla \cdot c=0, \ \ x\in \mathbb{R}^N,\ N\geq2,\\
                               v_{it}+u\cdot \nabla v_i+\partial_i p-\mu\Delta v_i-\Lambda c_i=G,\\
     c_t+u\cdot \nabla c+\Lambda v =L,\\
                (v,c)(0,x)=(v_0,c_0)(x),
  \end{array}
  \right.
\end{equation}
where div$u=0$ and $c_0=\Lambda^{-1}\nabla\cdot E_0$. This system is
similar to the system
    \begin{equation}
      \left\{
      \begin{array}{l}
    c_t+v\cdot\nabla c+\Lambda d=F,\\
        d_t+v\cdot\nabla d-\bar{\mu}\Delta d-\Lambda c=G,
      \end{array}
      \right.
    \end{equation}
 in Danchin's paper ( \cite{Danchin}). Using the
method studying the compressible Navier--Stokes system  in
\cite{Danchin} (Proposition 2.3), we can obtain the Proposition
\ref{vis-P4.3} and $L^1-$decay on $E$.

When we finished this paper, we noticed that some
similar results were also obtained independently in \cite{Qian}, where J.Z. Qian  introduced the function
    $
    d_{ij}=-\Lambda^{-1}{ \nabla_j v_i},
    $ and considered the  mixed parabolic--hyperbolic
 system  of $(E, d)$.

As for the related studies on the existence of solutions to
nonlinear elastic systems, there are works by Sideris\cite{Sideris}
and Agemi\cite{Agemi} on the global existence of classical small
solutions to three--dimensional compressible elasticity, under the
assumption that the nonlinear terms satisfy the null conditions. The
global existence for three--dimensional incompressible elasticity
was then proved via the incompressible limit method (
\cite{Sideris2004}) and  by a different method ( \cite{Sideris2007}). It
is worth noticing that the  global existence and uniqueness for the
corresponding two--dimensional problem is still open.

Now, let us recall some classical results for the following
incompressible Navier--Stokes equations.
\begin{equation}
  \left\{\begin{array}{l}
        v_t+v\cdot \nabla v+\nabla p=\mu\Delta v,\\\nabla \cdot v=0, \\
   \ v(0,x)=v_0(x).
  \end{array}
  \right.\label{vis-E1.1-NS}
\end{equation}
In 1934, J. Leray proved the existence of global weak solutions for
(\ref{vis-E1.1-NS}) with divergence--free $u_0\in L^2$ (see
\cite{Leray}). Then, H. Fujita and T. Kato\cite{Fujita}
obtain the uniqueness with $u_0\in \dot{H}^{\frac{N}{2}-1}$. The
index $s= N/2-1$ is critical for (\ref{vis-E1.1-NS}) with initial
data in $\dot{H}^s$: this is the lowest index for which uniqueness
has been proved (in the framework of Sobolev spaces). This fact is
closely linked to the concept of scaling invariant space. Let us
precise what we mean. For all $l > 0$, system (\ref{vis-E1.1-NS}) is
obviously invariant by the transformation
$$u(t, x) \rightarrow u_l(t, x) := lu(l^2t, lx),
u_0(x)\rightarrow u_{0l}(x):= lu_0(lx),$$
 and a straightforward
computation shows that $\|u_0\|_{\dot{H}^{\frac{N}{2}-1}}
=\|u_{0l}\|_{\dot{H}^{\frac{N}{2}-1}}$. This idea of using a
functional setting invariant by the scaling of (\ref{vis-E1.1-NS})
is now classical and originated many works. Refer to
\cite{Cannone,Chemin,Chemin07-0,Koch01} for a recent panorama.

The rest of this paper is organized as follows. In Section
\ref{vis-S2}, we state  three lemmas describing the intrinsic
properties of viscoelastic system. In Section \ref{vis-S3}, we
present the functional tool box: Littlewood--Paley decomposition,
product laws in Sobolev and hybrid Besov spaces. The next section is
devoted to the study of some linear models associated to
(\ref{vis-E1.4}). In Section \ref{vis-s5}, we will study  the local
well-posedness for (\ref{vis-E1.4}).  At last, we will prove Theorem
\ref{vis-T1.2} in Section \ref{vis-s6}.

\section{Basic mechanics of viscoelasticity}\label{vis-S2}
Using the similar arguments as that in the proof of Lemmas 1--3 in
\cite{Lei}, we can easily obtain the following three lemmas.

\begin{lem}
  Assume that $\mathrm{det}(I+E_0)=1$ is satisfied and $(v,F)$ is the solution of system (\ref{vis-E1.4}). Then the following is always true:
  \begin{equation}
    \mathrm{det}(I+E)=1,
  \end{equation}
for all time $t\geq0$, where the usual strain tensor $E=F-I$.
\end{lem}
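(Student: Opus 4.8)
The plan is to show that the scalar quantity $g:=\mathrm{det}(I+E)=\mathrm{det}\,F$ is simply transported by the (divergence--free) velocity field, and hence stays equal to its initial constant value $1$. First I would recast the evolution law (\ref{vis-E1.1})$_3$ for $F=I+E$ in material--derivative form: writing $\frac{D}{Dt}:=\partial_t+v\cdot\nabla$, the equation $F_t+v\cdot\nabla F=\nabla v\,F$ becomes $\frac{D}{Dt}F=(\nabla v)F$. Since the entries of $F$ differ from those of $E$ by constants and $B^{\frac{N}{2}}$ is an algebra embedded in $L^\infty$, the polynomial $g=\mathrm{det}\,F$ satisfies $g-1\in C([0,T];B^{\frac{N}{2}})$, which is the regularity we need to treat $g$ as a transported scalar.

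Next I would compute $\frac{D}{Dt}g$ by Jacobi's formula for the differential of the determinant. Using the identity $F\,\mathrm{adj}(F)=\mathrm{adj}(F)\,F=(\mathrm{det}\,F)\,I$ for the cofactor (adjugate) matrix together with the cyclic invariance of the trace,
\begin{align*}
\frac{D}{Dt}\,\mathrm{det}\,F
&=\mathrm{tr}\!\left(\mathrm{adj}(F)\,\frac{D}{Dt}F\right)
=\mathrm{tr}\!\left(\mathrm{adj}(F)\,(\nabla v)\,F\right)\\
&=\mathrm{tr}\!\left(F\,\mathrm{adj}(F)\,(\nabla v)\right)
=(\mathrm{det}\,F)\,\mathrm{tr}(\nabla v)
=(\mathrm{det}\,F)\,(\nabla\cdot v).
\end{align*}
The incompressibility $\nabla\cdot v=0$ then forces $\frac{D}{Dt}g=0$, i.e. $\partial_t g+v\cdot\nabla g=0$. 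Note that the adjugate form of Jacobi's identity is purely algebraic and requires no invertibility of $F$, which is convenient since invertibility of $I+E$ is not assumed a priori.

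Finally, $g-1$ solves the linear transport equation $\partial_t(g-1)+v\cdot\nabla(g-1)=0$ driven by the divergence--free field $v$, with vanishing initial datum $g(0)-1=\mathrm{det}(I+E_0)-1=0$. By uniqueness for the transport equation (available at the present regularity because $\nabla v\in L^1([0,T];B^{\frac{N}{2}})\hookrightarrow L^1([0,T];L^\infty)$), I conclude $g\equiv 1$, that is $\mathrm{det}(I+E)=1$ for all $t\ge 0$.

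I expect the only genuine difficulty to be the rigorous justification of the chain--rule computation at the critical Besov regularity: for smooth solutions the identity above is classical, while for solutions merely in $C([0,T];B^{\frac{N}{2}})$ one must either argue by approximation with smooth data and pass to the limit, or invoke the product and commutator estimates of Section \ref{vis-S3} to make sense of $v\cdot\nabla g$ and of the transport uniqueness. This is precisely where the algebra structure of $B^{\frac{N}{2}}$ and the bound on $\nabla v$ in $L^1([0,T];B^{\frac{N}{2}})$ enter.
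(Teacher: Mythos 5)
Your proof is correct and is essentially the argument the paper relies on: the paper gives no proof of this lemma, deferring to Lemmas 1--3 of \cite{Lei}, where exactly this computation (Jacobi's formula $\frac{D}{Dt}\det F=\mathrm{tr}(\mathrm{adj}(F)\frac{D}{Dt}F)=(\det F)\,\nabla\cdot v=0$, followed by transport uniqueness) is carried out. Your remarks on justifying the chain rule at critical Besov regularity, e.g.\ by integrating along the flow of $v\in L^1([0,T];\mathrm{Lip})$, address the only delicate point.
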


\begin{lem}\label{vis-L2.2}
  Assume that $\nabla\cdot E^\top_0=0$ is satisfied, then the solution $(v,F)$  of system (\ref{vis-E1.4}) satisfies the following identities:
  \begin{equation}
    \nabla \cdot F^\top=0, \ \textrm{ and }\ \nabla\cdot E^\top=0,
  \end{equation}
for all time $t\geq0$.
\end{lem}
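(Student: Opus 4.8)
The plan is to show that the quantity $G_i := (\nabla\cdot F^\top)_i = \partial_j F_{ji}$ satisfies a homogeneous transport equation along the divergence-free flow of $v$, so that it is propagated from its initial value. Since $F = I + E$ and the identity matrix is constant, $\nabla\cdot F^\top = \nabla\cdot E^\top$; hence the two claimed identities are equivalent, and it suffices to treat $G$. The hypothesis $\nabla\cdot E_0^\top = 0$ gives $G(0,\cdot) = \nabla\cdot F_0^\top = \nabla\cdot E_0^\top = 0$.

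First I would write the $F$-equation componentwise. Since $F_t + v\cdot\nabla F = \nabla v F$ with $(\nabla vF)_{ij} = \partial_k v_i\, F_{kj}$, the component with first index $j$ and second index $i$ reads
\begin{equation}
\partial_t F_{ji} + v_k\partial_k F_{ji} = \partial_k v_j\, F_{ki}.
\end{equation}
Applying $\partial_j$ and summing over $j$, the left-hand side becomes $\partial_t G_i + (\partial_j v_k)(\partial_k F_{ji}) + v_k\partial_k G_i$, while the right-hand side becomes $(\partial_j\partial_k v_j)\,F_{ki} + (\partial_k v_j)(\partial_j F_{ki})$.

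The crux is the two simplifications on the right. Because $v$ is divergence-free, $\partial_j\partial_k v_j = \partial_k(\nabla\cdot v) = 0$, which kills the term carrying a second derivative of $v$ contracted against $F_{ki}$. For the surviving quadratic term, relabeling the dummy indices $j\leftrightarrow k$ turns $(\partial_k v_j)(\partial_j F_{ki})$ into $(\partial_j v_k)(\partial_k F_{ji})$, which cancels exactly against the term $(\partial_j v_k)(\partial_k F_{ji})$ transferred from the left-hand side. What remains is the pure transport equation
\begin{equation}
\partial_t G_i + v\cdot\nabla G_i = 0.
\end{equation}

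With $G(0,\cdot)=0$ and $v$ divergence-free and regular enough in the functional framework of Theorem \ref{vis-T1.1}, uniqueness for the transport equation forces $G\equiv 0$ for all $t\ge 0$; equivalently $\nabla\cdot F^\top = \nabla\cdot E^\top = 0$. The only genuine obstacle is the index bookkeeping in the cancellation step — one must differentiate the $(j,i)$ component rather than the $(i,j)$ component, so that the divergence-free identity $\partial_j\partial_k v_j=0$ applies and the two quadratic terms pair up — together with checking that a transport estimate is available at the regularity $E\in C([0,T];B^{N/2})$, $v\in L^1([0,T];B^{N/2+1})$ supplied by the local theory, which guarantees $\nabla G$ and the coefficient $v$ are controlled in the relevant Besov norms.
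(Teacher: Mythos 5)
Your proof is correct and is exactly the argument the paper relies on: the paper gives no proof of its own but defers to Lemmas 1--3 of \cite{Lei}, whose proof of this statement is precisely your computation (differentiate the $(j,i)$ component, kill $(\partial_j\partial_k v_j)F_{ki}$ by $\nabla\cdot v=0$, cancel the two quadratic terms by relabeling dummy indices, and propagate $G\equiv 0$ by the transport equation). The regularity check is also fine, since $G=\nabla\cdot E^\top\in C([0,T];B^{\frac{N}{2}-1})$ and $\nabla v\in L^1_T(B^{\frac{N}{2}})$ place you squarely in the scope of Proposition \ref{vis-P4.1} with $s=\frac{N}{2}-1$.
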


\begin{lem}\label{vis-L2.3}
  Assume that  (\ref{vis-E1.3}) is satisfied and $(v,F)$ is the solution of system (\ref{vis-E1.4}). Then the following is always true:
  \begin{equation}
    \partial_m E_{ij}-\partial_jE_{im}
       =E_{lj}\partial_lE_{im}-E_{lm}\partial_lE_{ij},
  \end{equation}
for all time $t\geq0$.
\end{lem}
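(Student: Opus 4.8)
The plan is to recast the compatibility relation as the vanishing of a single tensor that itself satisfies a linear transport equation driven by the same $\nabla v$ that drives $F$, and then to propagate its zero initial value by uniqueness. First I would pass from $E$ back to the full deformation tensor $F=I+E$. Substituting $E_{ij}=F_{ij}-\delta_{ij}$ into the relation and cancelling the terms linear in $\delta$, the identity collapses to the clean symmetric form $F_{lm}\partial_l F_{ij}=F_{lj}\partial_l F_{im}$. Accordingly I define
\[
\Gamma_{ijm}:=F_{lm}\partial_l F_{ij}-F_{lj}\partial_l F_{im},
\]
so that the hypothesis (\ref{vis-E1.3}) is exactly $\Gamma_{ijm}(0,\cdot)=0$, while the conclusion of the lemma is $\Gamma_{ijm}(t,\cdot)\equiv 0$ for all $t\geq 0$.

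The core step is to compute the material derivative $D_t\Gamma_{ijm}$ with $D_t:=\partial_t+v_k\partial_k$, using the $F$-form of (\ref{vis-E1.4})$_3$, namely $D_t F_{ij}=\partial_k v_i\,F_{kj}$. The only subtlety is the commutator $[D_t,\partial_l]=-(\partial_l v_k)\partial_k$, which gives $D_t(\partial_l F_{ij})=\partial_l(\partial_k v_i F_{kj})-(\partial_l v_k)\partial_k F_{ij}$. Expanding $D_t\Gamma_{ijm}$ together with its $j\leftrightarrow m$ partner, I expect three families of terms to drop out: the terms carrying $(\partial_l v_k)\partial_k F$ cancel against those coming from $D_t F_{lm}$ after relabelling dummy indices; the second-derivative terms of the form $F_{lm}F_{kj}\partial_l\partial_k v_i$ cancel their swapped partners by symmetry of $\partial_l\partial_k v_i$; and the surviving remainder reorganizes into $\partial_k v_i\,(F_{lm}\partial_l F_{kj}-F_{lj}\partial_l F_{km})=\partial_k v_i\,\Gamma_{kjm}$. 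Thus $\Gamma$ solves the homogeneous linear system
\[
\partial_t\Gamma_{ijm}+v\cdot\nabla\Gamma_{ijm}=\partial_k v_i\,\Gamma_{kjm},
\]
which has precisely the transport-plus-$\nabla v$ structure of the equation for $F$ itself.

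With this identity the conclusion is immediate: since $\Gamma(0,\cdot)=0$ and the right-hand side is linear and homogeneous in $\Gamma$, uniqueness forces $\Gamma\equiv 0$. Concretely I would run a Gr\"onwall argument on a norm of $\Gamma(t)$ adapted to the regularity of the solution furnished by Theorem \ref{vis-T1.1}, so that $v$, $\nabla v$ and $\nabla F$ have enough integrability to control the coefficients, invoking the linear transport estimates in Besov spaces recalled in Section \ref{vis-S3}. I expect the only genuine work to be the index bookkeeping in the middle step, i.e. confirming that every term not proportional to $\Gamma$ really cancels, rather than the functional-analytic closure, which is routine once the homogeneous structure is exposed. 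This is also why the statement follows ``by the same arguments as Lemmas 1--3 in \cite{Lei}'': the mechanism is purely the algebraic persistence of a transported constraint.
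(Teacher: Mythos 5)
Your proposal is correct and is essentially the argument the paper relies on: the paper simply invokes the proof of Lemmas 1--3 in \cite{Lei}, where the compatibility condition is likewise rewritten as $F_{lm}\partial_l F_{ij}=F_{lj}\partial_l F_{im}$ and the defect tensor is shown to satisfy the homogeneous transport equation $\partial_t\Gamma+v\cdot\nabla\Gamma=\nabla v\,\Gamma$, whence $\Gamma\equiv 0$ by Gr\"onwall. Your index computation (cancellation of the $\partial\partial v$ terms by symmetry and of the $(\partial v)(\partial F)$ terms after relabelling) checks out.
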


\section{Littlewood--Paley theory and Besov spaces}\label{vis-S3}
The proof of most of the results presented in this paper requires a
dyadic decomposition of Fourier variable (\textit{Littlewood--Paley
composition}). Let us briefly explain how it may be built in the
case $x\in \mathbb{R}^N$, $N\geq2$, (see
\cite{Danchin,Danchin2001,Danchin2003}).

 Let $\mathcal{S}(\mathbb{R}^N)$ be the Schwarz class. $\varphi(\xi)$ is a  smooth function valued in [0,1]
such that
    $$
    \textrm{supp}\varphi\subset\{\frac{3}{4}\leq|\xi|\leq\frac{8}{3}\}
    \ \textrm{ and }
     \sum_{q\in\mathbb{Z}}\varphi(2^{-q}\xi)=1,\  |\xi|\not=0.
    $$
Let $h(x)=(\mathcal{F}^{-1}\varphi)(x)$. For $f\in\mathcal{S'}$
(denote the set of temperate distributes, which is the dual one of
$\mathcal{S}$), we can define the homogeneous dyadic blocks as
follows:
        $$
        \Delta_{q}f(x):=\varphi(2^{-q}D)f(x)=2^{Nq}\int_{\mathbb{R}^N} h(2^{q}y)f(x-y)dy,
        \ \mbox{if}\        q\in\mathbb{Z},
        $$
where    $\mathcal{F}^{-1} $ represents the    inverse Fourier
transform. Define the low frequency cut-off by
    $$
    S_{q}f(x):=\sum_{p\leq
    q-1}\Delta_{p}f(x)=\chi(2^{-q}D)f(x).
    $$
 The  Littlewood--Paley
decomposition has nice properties of quasi-orthogonality,
    $$
    \Delta_{p}\Delta_{q}f_1\equiv 0,\ \mbox{if}\ |p-q|\geq 2,
    $$
and
        $$
        \Delta_{q}(S_{p-1}f_1\Delta_{p}f_2)\equiv 0,
               \ \mbox{if}\ |p-q|\geq 5.
        $$

The Besov space can be characterized in virtue of the
Littlewood--Paley decomposition.
\begin{defn}
Let $1\leq p\leq\infty$ and $s\in \mathbb{R}$. For $1\leq r\leq
\infty$, the Besov spaces $\dot{B}^{s}_{p,r}(\mathbb{R}^N)$,
$N\geq2$, are defined by
    $$
    f\in \dot{B}^{s}_{p,r}(\mathbb{R}^N)
    \Leftrightarrow \left\| 2^{qs}\|\Delta_{q}f\|_{L^{p}(\mathbb{R}^N)}
    \right\|_{l^r_q}<\infty
    $$
and $B^{s}(\mathbb{R}^N)=\dot{B}^{s}_{2,1}(\mathbb{R}^N)$.
\end{defn}

The definition of $\dot{B}^s_{p,r}(\mathbb{R}^N)$ does not depend on
the choice of the Littlewood--Paley decomposition.  Let us recall
some classical estimates in Sobolev spaces for the product of two
functions ( \cite{Danchin2005}).

\begin{prop}\label{vis-P3.1}
  Let $1\leq r,p,p_1,p_2\leq\infty$. Then following inequalities hold true:
    \begin{equation}
    \|uv\|_{\dot{B}^{s}_{p,r}}\lesssim \|u\|_{L^\infty}\|v\|_{\dot{B}^{s }_{p,r}}
    +\|v\|_{L^\infty}\|u\|_{\dot{B}^{s }_{p,r}},\ \textrm{ if  }\ s>0,
    \end{equation}
        \begin{equation}
    \|uv\|_{\dot{B}^{s_1+s_2-\frac{N}{p}}_{p,r}}\lesssim \|u\|_{\dot{B}^{s_1}_{p,r}}\|v\|_{\dot{B}^{s_2}_{p,\infty}},
    \ \textrm{ if }\ s_1,s_2<\frac{N}{p}\ \textrm{ and }\ s_1+s_2>0,
        \end{equation}
        \begin{equation}
    \|uv\|_{\dot{B}^{s}_{p,r}}\lesssim \|u\|_{\dot{B}^{s}_{p,r}}\|v\|_{\dot{B}^{\frac{N}{p}}_{p,\infty}\cap L^\infty},
    \ \textrm{ if }\ |s| <\frac{N}{p},
        \end{equation}
        \begin{equation}
    \|uv\|_{\dot{B}^{-\frac{N}{p} }_{p,\infty}}\lesssim \|u\|_{\dot{B}^{s}_{p,1}}
    \|v\|_{\dot{B}^{-s}_{p,\infty} },
    \ \textrm{ if }\  s  \in(-\frac{N}{p},\frac{N}{p}],\ p\geq2.
        \end{equation}
\end{prop}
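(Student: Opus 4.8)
The plan is to derive all four inequalities from a single device, Bony's paraproduct decomposition
$$uv = T_u v + T_v u + R(u,v),\qquad T_f g := \sum_q S_{q-1}f\,\Delta_q g,\quad R(f,g):=\sum_{|q-q'|\le 1}\Delta_q f\,\Delta_{q'}g,$$
and then estimate each of the three pieces separately. The virtue of this splitting is spectral: by the quasi-orthogonality relations recorded in Section \ref{vis-S3}, every summand $S_{q-1}f\,\Delta_q g$ of a paraproduct has Fourier support in an annulus $\{|\xi|\sim 2^q\}$, so that $\Delta_j(T_fg)$ only collects the indices $q$ with $|q-j|\le C$; whereas every summand $\Delta_q f\,\Delta_{q'}g$ of the remainder has spectrum in a ball $\{|\xi|\lesssim 2^q\}$, so that $\Delta_jR(f,g)$ collects all $q\ge j-C$. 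Coupled with Bernstein's inequalities (for a distribution spectrally supported in a ball of radius $2^q$ one controls $\|\cdot\|_{L^b}$ by $2^{qN(1/a-1/b)}\|\cdot\|_{L^a}$ when $a\le b$), this turns every assertion into the summation of a geometric series in $q$, and it is the sign of the exponent in that series that produces the index conditions.

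First I would establish the second (master) inequality. For $T_u v$ I use $\|S_{q-1}u\|_{L^\infty}\lesssim 2^{q(N/p-s_1)}\|u\|_{\dot B^{s_1}_{p,r}}$, which is where the hypothesis $s_1<N/p$ enters: it makes the low-frequency sum a convergent geometric series. Pairing this with $\|\Delta_q v\|_{L^p}\lesssim 2^{-qs_2}\|v\|_{\dot B^{s_2}_{p,\infty}}$ yields the regularity $s_1+s_2-\frac{N}{p}$ with the desired $\ell^r$ control inherited from $u$; the term $T_v u$ is symmetric and invokes $s_2<N/p$. For the remainder I estimate $\|\Delta_q u\,\Delta_{q'}v\|_{L^{p/2}}\le\|\Delta_q u\|_{L^p}\|\Delta_{q'}v\|_{L^p}$, whose frequency weights multiply to $2^{-q(s_1+s_2)}$, so that the hypothesis $s_1+s_2>0$ makes $\sum_{q\ge j}2^{-q(s_1+s_2)}\cdots$ converge; a Bernstein embedding $\dot B^{\sigma}_{p/2,r}\hookrightarrow\dot B^{\sigma-N/p}_{p,r}$ transfers the $L^{p/2}$ bound to $L^p$.

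The first and third inequalities follow from the same template. For the first, both paraproducts are controlled by the continuity estimate $\|T_fg\|_{\dot B^s_{p,r}}\lesssim\|f\|_{L^\infty}\|g\|_{\dot B^s_{p,r}}$, and the remainder, handled through $L^\infty\hookrightarrow\dot B^0_{\infty,\infty}$, needs only $s>0$. For the third, $T_v u$ is bounded by $\|v\|_{L^\infty}\|u\|_{\dot B^s_{p,r}}$, $T_u v$ requires $s<N/p$, and the remainder uses $v\in\dot B^{N/p}_{p,\infty}\cap L^\infty$ together with $s+\frac{N}{p}>0$; the two constraints combine into $|s|<N/p$.

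The main obstacle is the last inequality, an endpoint statement: the output regularity $-N/p$ is the lowest admissible, and the two input regularities $s$ and $-s$ sum to exactly $0$, so the remainder $R(u,v)$ sits at the borderline $s_1+s_2=0$ where the relevant series neither decays nor grows. Here the $\ell^1$ summability built into the hypothesis $u\in\dot B^s_{p,1}$ is essential: writing $\|\Delta_q u\|_{L^p}\|\Delta_{q'}v\|_{L^p}\lesssim c_q\,\|u\|_{\dot B^s_{p,1}}\|v\|_{\dot B^{-s}_{p,\infty}}$ with $\sum_q c_q\lesssim 1$ (the weights $2^{-qs}$ and $2^{qs}$ cancelling at the borderline), the diagonal blocks $R_q$ form a summable family in $L^{p/2}$. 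To conclude $R(u,v)\in\dot B^0_{p/2,\infty}$ one bounds $\|\Delta_jR\|_{L^{p/2}}\le\sum_{q\ge j-C}\|R_q\|_{L^{p/2}}$, and this triangle inequality in $L^{p/2}$ is licit precisely when $p/2\ge 1$, which is exactly the hypothesis $p\ge 2$; a Bernstein embedding $\dot B^0_{p/2,\infty}\hookrightarrow\dot B^{-N/p}_{p,\infty}$ then delivers the claim. The two paraproducts pin down the range $s\in(-\frac{N}{p},\frac{N}{p}]$, the endpoint $s=N/p$ surviving again thanks to the $\ell^1$ index of $u$ and the lower endpoint being strict because of $T_v u$. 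I expect the careful bookkeeping of these borderline summations, rather than any conceptual difficulty, to be the delicate part.
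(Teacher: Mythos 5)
The paper does not prove this proposition at all: it is quoted verbatim (down to the vestigial, unused indices $p_1,p_2$) from Danchin's uniqueness paper \cite{Danchin2005} as a collection of classical product laws. So the only meaningful comparison is with the standard literature proof, and your proposal reconstructs exactly that proof: Bony's decomposition, spectral localization of $T_fg$ in annuli and of $R(f,g)$ in balls, Bernstein, and geometric-series bookkeeping. Your identification of which hypothesis controls which term ($s_i<N/p$ for the paraproducts, $s_1+s_2>0$ for the remainder, the $\ell^1$ index of $u$ and $p\ge 2$ rescuing the borderline remainder in the fourth inequality, the endpoint $s=N/p$ surviving only on the $\dot B^{s}_{p,1}$ side) is accurate, and for every application made in this paper --- which takes $p=2$ throughout, since $B^s=\dot B^s_{2,1}$ --- the argument goes through.

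The one point you should make explicit is that your remainder estimates for the \emph{second and third} inequalities also require $p\ge 2$, not just the fourth. In both cases you pass through $\|\Delta_qu\,\widetilde{\Delta}_{q}v\|_{L^{p/2}}\le\|\Delta_qu\|_{L^p}\|\widetilde{\Delta}_{q}v\|_{L^p}$ followed by the embedding $\dot B^{\sigma}_{p/2,r}\hookrightarrow\dot B^{\sigma-N/p}_{p,r}$; the Bernstein inequality behind that embedding (and the triangle inequality in $L^{p/2}$) needs $p/2\ge 1$. For $p<2$ the only available substitute is $\|\Delta_qu\,\widetilde{\Delta}_{q}v\|_{L^{p}}\le\|\Delta_qu\|_{L^p}\|\widetilde{\Delta}_{q}v\|_{L^\infty}\lesssim 2^{qN/p}\|\Delta_qu\|_{L^p}\|\widetilde{\Delta}_{q}v\|_{L^p}$, and then the relevant series converges only under the stronger condition $s_1+s_2>N\left(\frac{2}{p}-1\right)$; this is in fact the condition under which the product law is usually stated for $p<2$, so the inequality as printed for all $1\le p\le\infty$ with only $s_1+s_2>0$ is not something your argument (or, to my knowledge, any argument) delivers. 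This is a defect of the paper's transcription rather than of your proof, but since you are proving the statement as written you should either restrict to $p\ge 2$ up front or record the corrected hypothesis for $p<2$. A second, minor omission: in homogeneous spaces one should say a word about why $uv$ and the three Bony pieces make sense in $\mathcal{S}'$ (low-frequency convergence), which is guaranteed here because the output regularity indices are all less than $N/p$.
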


We give the following definition of hybrid Besov norms as that in
 \cite{Danchin}.
\begin{defn}
 For $\mu>0$,  $r\in [1,+\infty]$ and $s\in
\mathbb{R}$, we denote
    $$
    \|u\|_{\widetilde{B}^{s,r}_\mu}:=\sum_{ q\in\mathbb{Z}}
     2^{qs} \max \{\mu,2^{-q}\}^{1-\frac{2}{r}}
     \|\Delta_qu\|_{L^2}.
     $$
\end{defn}
Let us remark that$\|f\|_{\widetilde{B}^{s,\infty}_\mu}\approx
\|f\|_{{B}^s\cap B^{s-1}}$ and $\|f\|_{\widetilde{B}^{s,2}_\mu}=
\|f\|_{{B}^s}$. Let us recall some  estimates in hybrid Besov spaces
for the product of two functions.

\begin{prop}[ \cite{Danchin}, Proposition 5.3]\label{vis-P3.2}
Let $r\in [1,\infty]$ and $s,t\in\mathbb{R}$. There exists some
constant C such that
    $$
    \|T_u v\|_{\tilde{B}^{s+t-\frac{N}{2},r}_\mu}\leq C\|u\|_{\tilde{B}^{s,r}_\mu}\|v\|_{B^t}, \textrm{ if }
    \ s\leq\min\{1-\frac{2}{r}+\frac{N}{2},\frac{N}{2}\},
    $$
  $$
    \|T_u v\|_{\tilde{B}^{s+t-\frac{N}{2},r}_\mu}\leq C\|u\|_{B^s}\|v\|_{\tilde{B}^{t,r}_\mu}, \textrm{ if }
    \ s\leq \frac{N}{2} ,
    $$
  $$
    \|R(u, v)\|_{\tilde{B}^{s+t-\frac{N}{2},r}_\mu}\leq C\|u\|_{\tilde{B}^{s,r}_\mu}\|v\|_{B^t}, \textrm{ if }
    \ s+t>\max\{0,1- \frac{2}{r} \},
    $$
  where  $$
        T_{f}g=\sum_{p\leq
        q-2}\Delta_{p}f\Delta_{q}g=\sum_{q}S_{q-1}f\Delta_{q}g
        $$
and    $$
    R(f,g)=\sum_{q}\Delta_{q}f\widetilde{\Delta}_{q}g\ \mbox{with}
     \ \widetilde{\Delta}_{q}:=\Delta_{q-1}+\Delta_{q}+\Delta_{q+1}.
     $$
\end{prop}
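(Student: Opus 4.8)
The plan is to establish all three inequalities by Bony's paraproduct machinery, reducing each to a weighted double sum over dyadic frequencies whose kernel carries the hybrid weight $\max\{\mu,2^{-q}\}^{1-2/r}$; the only point requiring care beyond the classical $B^s$ product laws is that this weight depends on the frequency, so the stated conditions on $(s,t,r)$ are exactly those making the kernel uniformly bounded across the regimes $2^{-q}\ge\mu$, $2^{-q}\le\mu$ and the mixed one. For the second paraproduct bound I start from the localization $\Delta_{q'}(S_{q-1}u\,\Delta_q v)\equiv 0$ for $|q-q'|\ge 5$, so that $\|\Delta_{q'}T_u v\|_{L^2}\lesssim\sum_{|q-q'|\le 4}\|S_{q-1}u\|_{L^\infty}\|\Delta_q v\|_{L^2}$; Bernstein's inequality gives $\|S_{q-1}u\|_{L^\infty}\lesssim 2^{q(N/2-s)}\|u\|_{B^s}$, with the geometric sum converging for $s\le \frac N2$ (the endpoint using $B^{N/2}\hookrightarrow L^\infty$). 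Because the output frequency is $\approx 2^q$, the weight $\max\{\mu,2^{-q'}\}^{1-2/r}$ transfers directly onto $\Delta_q v$ and the two powers of $2^q$ recombine into $\sum_q 2^{qt}\max\{\mu,2^{-q}\}^{1-2/r}\|\Delta_q v\|_{L^2}=\|v\|_{\widetilde{B}^{t,r}_\mu}$, closing this case under the single hypothesis $s\le \frac N2$.

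The first bound is the subtle one, since $u$ now carries the hybrid norm while sitting at low frequency, whereas the output lives at frequency $\approx 2^q$. Writing $a_p=2^{ps}\max\{\mu,2^{-p}\}^{1-2/r}\|\Delta_p u\|_{L^2}$ and $b_q=2^{qt}\|\Delta_q v\|_{L^2}$, the same localization and Bernstein step reduce the claim to $\sum_q b_q\sum_{p\le q-2}K(q,p)\,a_p\lesssim\big(\sum_p a_p\big)\big(\sum_q b_q\big)$ with
\[
K(q,p)=2^{(q-p)(s-\frac N2)}\left(\frac{\max\{\mu,2^{-q}\}}{\max\{\mu,2^{-p}\}}\right)^{1-\frac2r},\qquad p\le q-2,
\]
for which a uniform bound $K\lesssim 1$ is enough. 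In the low--low regime the exponent is $(q-p)(s-\frac N2-1+\frac2r)$, bounded iff $s\le\frac N2+1-\frac2r$; in the high--high regime the ratio is $1$ and one needs $s\le\frac N2$; in the mixed regime $2^{-q}<\mu\le 2^{-p}$ one uses $\mu>2^{-q}$ to bound the factor $(\mu2^p)^{1-2/r}$ (large precisely when $r<2$) by $2^{(q-p)(2/r-1)}$, again reaching $s\le\frac N2+1-\frac2r$. Hence $K$ is bounded exactly under $s\le\min\{\frac N2,\frac N2+1-\frac2r\}$.

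For the remainder both factors of $\Delta_q u\,\widetilde\Delta_q v$ sit at frequency $\approx 2^q$, so $\Delta_{q'}(\Delta_q u\,\widetilde\Delta_q v)\equiv 0$ unless $q\ge q'-N_0$ for a fixed integer $N_0$, and the output index $q'$ may be arbitrarily low. I use the $L^1\to L^2$ Bernstein inequality $\|\Delta_{q'}g\|_{L^2}\lesssim 2^{Nq'/2}\|g\|_{L^1}$ together with $\|\Delta_q u\,\widetilde\Delta_q v\|_{L^1}\le\|\Delta_q u\|_{L^2}\|\widetilde\Delta_q v\|_{L^2}$ to produce the compensating factor $2^{Nq'/2}$. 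Substituting the sequence weights reduces the estimate to summing $\widetilde K(q',q)=2^{(q'-q)(s+t)}(\max\{\mu,2^{-q'}\}/\max\{\mu,2^{-q}\})^{1-2/r}$ over $q'\le q+N_0$: the near--diagonal part converges iff $s+t>0$, while as $q'\to-\infty$ (where $\max\{\mu,2^{-q'}\}=2^{-q'}$) the summand behaves like $2^{q'(s+t-1+2/r)}$ and converges iff $s+t>1-\frac2r$, giving $s+t>\max\{0,1-\frac2r\}$.

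The step I expect to be the main obstacle is the mixed regime in the first estimate when $r<2$: there the hybrid weight increases with frequency, the naive estimate fails, and one must invoke the defining inequality $\mu>2^{-q}$ of that regime to convert the weight ratio into a decaying power of $2^{q-p}$. Handling the three regimes uniformly in the parameter $\mu$ is what makes the bookkeeping delicate, but no idea beyond Bony's decomposition and Bernstein's inequality is needed.
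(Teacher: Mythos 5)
Your proof is correct: the paper itself gives no argument for this proposition (it is imported verbatim from Danchin's Proposition 5.3), and your reconstruction via Bony's decomposition, Bernstein's inequality, and the three-regime analysis of the hybrid weight kernel $K(q,p)$ is exactly the standard route taken in that reference, with the derived conditions $s\le\min\{\frac N2,\frac N2+1-\frac2r\}$, $s\le\frac N2$, and $s+t>\max\{0,1-\frac2r\}$ matching the statement. In particular your treatment of the mixed regime for $r<2$, converting $(\mu 2^p)^{1-2/r}$ into $2^{(q-p)(2/r-1)}$ via $\mu\ge 2^{-q}$, is the correct key step.
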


\begin{lem}\label{vis-L3.1}
  \begin{equation}
   \left\| [\Lambda^{-1}\nabla, u]\nabla v\right\|_{\tilde{B}^{\frac{N}{2},\infty}_\mu}
   \leq C\|\nabla
   u\|_{B^\frac{N}{2}}\|v\|_{\tilde{B}^{\frac{N}{2},\infty}_\mu}.
  \end{equation}
\end{lem}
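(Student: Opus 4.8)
The plan is to combine Bony's paraproduct decomposition with a kernel estimate for the localized commutator, so that the genuine commutator structure is exploited exactly once, on the one piece where it is indispensable. Writing
$$[\Lambda^{-1}\nabla,u]\nabla v=\Lambda^{-1}\nabla(u\nabla v)-u\,\Lambda^{-1}\nabla\nabla v,$$
I would expand each product by $ab=T_ab+T_ba+R(a,b)$ and regroup as
$$[\Lambda^{-1}\nabla,u]\nabla v=[\Lambda^{-1}\nabla,T_u]\nabla v+\Lambda^{-1}\nabla\bigl(T_{\nabla v}u+R(u,\nabla v)\bigr)-T_{\Lambda^{-1}\nabla\nabla v}u-R(u,\Lambda^{-1}\nabla\nabla v).$$
Here $\Lambda^{-1}\nabla$ and $\Lambda^{-1}\nabla\nabla$ are Fourier multipliers, homogeneous of degree $0$ and $1$ and smooth off the origin; since they commute with $\Delta_q$ and are uniformly bounded on each dyadic block, they map $\widetilde B^{s,\infty}_\mu\to\widetilde B^{s,\infty}_\mu$ and $\widetilde B^{s,\infty}_\mu\to\widetilde B^{s-1,\infty}_\mu$ respectively. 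In particular $\Lambda^{-1}\nabla\nabla v\in\widetilde B^{\frac N2-1,\infty}_\mu$ whenever $v\in\widetilde B^{\frac N2,\infty}_\mu$.

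For the four terms carrying no cancellation I would invoke Proposition \ref{vis-P3.2} directly, using $\|\nabla u\|_{B^{N/2}}\approx\|u\|_{B^{N/2+1}}$ and $\|\nabla v\|_{\widetilde B^{N/2-1,\infty}_\mu}\lesssim\|v\|_{\widetilde B^{N/2,\infty}_\mu}$. Each of $T_{\nabla v}u$, $R(u,\nabla v)$, $T_{\Lambda^{-1}\nabla\nabla v}u$, $R(u,\Lambda^{-1}\nabla\nabla v)$ falls under the paraproduct/remainder estimates with the hybrid factor of regularity $s=\frac N2-1$, the $B^t$-factor $t=\frac N2+1$, and $r=\infty$: the output index is $s+t-\frac N2=\frac N2$, the paraproduct constraint $s\le\frac N2$ holds, and the remainder constraint $s+t=N>\max\{0,1-\tfrac2r\}=1$ holds for $N\ge2$. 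Hence each is bounded by $C\|u\|_{B^{N/2+1}}\|v\|_{\widetilde B^{N/2,\infty}_\mu}=C\|\nabla u\|_{B^{N/2}}\|v\|_{\widetilde B^{N/2,\infty}_\mu}$, and the outer $\Lambda^{-1}\nabla$ does not affect the bound.

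The heart of the matter — and the step I expect to be the main obstacle — is the paraproduct–commutator $[\Lambda^{-1}\nabla,T_u]\nabla v=\sum_q[\Lambda^{-1}\nabla,S_{q-1}u]\Delta_q\nabla v$, because a crude product bound would only yield $\|\nabla u\|\,\|\nabla v\|$, which cannot be absorbed into the high-frequency ($B^{N/2}$) part of $\widetilde B^{N/2,\infty}_\mu$. To extract the gain of one derivative I would note that $S_{q-1}u\,\Delta_q\nabla v$ is spectrally supported in an annulus of size $2^q$, insert a fattened block $\widetilde\Delta_q$, and write, with $h_q(y)=2^{Nq}h(2^qy)$ the (Schwartz) kernel of $\widetilde\Delta_q\Lambda^{-1}\nabla$,
$$[\Lambda^{-1}\nabla,S_{q-1}u]\Delta_q\nabla v(x)=\int h_q(y)\bigl(S_{q-1}u(x-y)-S_{q-1}u(x)\bigr)\Delta_q\nabla v(x-y)\,dy.$$
The first-order Taylor formula $S_{q-1}u(x-y)-S_{q-1}u(x)=-\int_0^1 y\cdot\nabla S_{q-1}u(x-sy)\,ds$ together with $\int|y|\,|h_q(y)|\,dy=2^{-q}\int|z|\,|h(z)|\,dz$ gives
$$\bigl\|[\Lambda^{-1}\nabla,S_{q-1}u]\Delta_q\nabla v\bigr\|_{L^2}\lesssim 2^{-q}\|\nabla u\|_{L^\infty}\|\Delta_q\nabla v\|_{L^2}\lesssim\|\nabla u\|_{L^\infty}\|\Delta_q v\|_{L^2},$$
the last step by Bernstein. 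Since each summand is localized near frequency $2^q$, applying $\Delta_{q'}$ leaves only finitely many terms with $|q-q'|\le C$, so summing against the weight $2^{q'N/2}\max\{\mu,2^{-q'}\}$ yields
$$\|[\Lambda^{-1}\nabla,T_u]\nabla v\|_{\widetilde B^{N/2,\infty}_\mu}\lesssim\|\nabla u\|_{L^\infty}\sum_q 2^{qN/2}\max\{\mu,2^{-q}\}\|\Delta_q v\|_{L^2}=\|\nabla u\|_{L^\infty}\|v\|_{\widetilde B^{N/2,\infty}_\mu}.$$
Finally $B^{N/2}\hookrightarrow L^\infty$ gives $\|\nabla u\|_{L^\infty}\lesssim\|\nabla u\|_{B^{N/2}}$, and collecting the five contributions proves the claim. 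The points worth double-checking are the spectral localization of the commutator summands (legitimizing the almost-orthogonal summation) and the uniform $L^2$-boundedness of $\widetilde\Delta_q\Lambda^{-1}\nabla$, both standard since $\Lambda^{-1}\nabla$ is a smooth homogeneous symbol of degree $0$.
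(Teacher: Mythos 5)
Your proof is correct, but it takes a genuinely different route from the paper's. The paper argues entirely on the Fourier side: writing out the symbol of the commutator, it uses the elementary pointwise bound $\bigl|\frac{\xi_i}{|\xi|}-\frac{\eta_i}{|\eta|}\bigr|\,|\eta_j|\le 2|\xi-\eta|$ to get $|\mathcal{F}([\Lambda^{-1}\partial_i,u]\partial_j v)|\le \mathcal{F}\bigl((\nabla u)^{*}v^{*}\bigr)$ pointwise, where $f^{*}=\mathcal{F}^{-1}(|\hat f|)$; since the norms $\tilde B^{s,r}_\mu$ depend only on $\|\varphi(2^{-q}\xi)\hat f\|_{L^2}$, hence only on $|\hat f|$, the lemma then follows from the product law of Proposition \ref{vis-P3.2} applied to $(\nabla u)^{*}v^{*}$, with no Bony splitting of the commutator itself. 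You instead use the standard paradifferential machinery: Bony's decomposition, direct product laws for the four commutator-free pieces (your index bookkeeping there is right: $s=\frac N2-1\le\frac N2$ for the paraproducts and $s+t=N>1$ for the remainders with $r=\infty$), and the first-order Taylor/kernel estimate for $[\Lambda^{-1}\nabla,T_u]\nabla v$, whose summands are indeed spectrally localized in dyadic annuli so the almost-orthogonal resummation is legitimate. What each approach buys: the paper's majorant trick is shorter and exploits the commutation gain exactly once through the symbol cancellation, but it leans on the fact that these $L^2$-based norms are monotone in $|\hat f|$, so it would not transfer to $L^p$-based Besov spaces; your argument is longer but is the robust, textbook route and generalizes. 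A cosmetic point: you recycle the letter $h$ for the kernel of $\tilde\Delta_q\Lambda^{-1}\nabla$, while the paper reserves $h$ for $\mathcal{F}^{-1}\varphi$.
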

\begin{proof}
  Since
        \begin{eqnarray*}
       \left|\mathcal{F}([\Lambda^{-1}\partial_i, u]\partial_j v)\right|&=&\left|\int_{\mathbb{R}^N}
       \frac{\xi_i}{|\xi|}\hat{u}(\xi-\eta)\eta_j\hat{v}(\eta)-\hat{u}(\xi-\eta)
       \frac{\eta_i}{|\eta|}\eta_j\hat{v}(\eta)
       d\eta\right|\\
            &=& \left|\int_{\mathbb{R}^N}
       \frac{\xi_i|\eta|-\eta_i|\xi|}{|\xi||\eta|}\eta_j\hat{u}(\xi-\eta)\hat{v}(\eta)
       d\eta\right|\\
                   &=&\left| \int_{\mathbb{R}^N}
       \frac{\xi_i(|\eta|-|\xi|)+(\xi_i-\eta_i)|\xi|}{|\xi||\eta|}\eta_j\hat{u}(\xi-\eta)\hat{v}(\eta)
       d\eta\right|\\
       &\leq&\int_{\mathbb{R}^N}
       |\eta-\xi||\hat{u}(\xi-\eta)||\hat{v}(\eta)|
       d\eta\\
        &=&\int_{\mathbb{R}^N}
       \widehat{(\nabla u)^*}(\xi-\eta)\widehat{v^*}(\eta)
       d\eta=\mathcal{F}((\nabla u)^*v^*),\\
        \end{eqnarray*}
where $f^*=\mathcal{F}^{-1}(|\hat{f}|)$, we have
    \begin{eqnarray*}
    \|\Delta_p  (  [\Lambda^{-1}\partial_i, u]\partial_j v))\|_{L^2}&=&C
\|\varphi(2^{-p}\xi) \mathcal{F} (  [\Lambda^{-1}\partial_i,
u]\partial_j v))\|_{L^2_\xi}\\
    &\leq& C \|\varphi(2^{-p}\xi)
\mathcal{F}((\nabla u)^*v^*)\|_{L^2_\xi}\\
&=&C \|\Delta_p  ( (\nabla u)^*v^*)\|_{L^2}
    \end{eqnarray*}
and from Proposition \ref{vis-P3.2},
    \begin{eqnarray*}
      \left\| [\Lambda^{-1}\nabla, u]\nabla v\right\|_{\tilde{B}^{\frac{N}{2},\infty}_\mu}
   &\leq& C\left\| (\nabla u)^*v^*\right\|_{\tilde{B}^{\frac{N}{2},\infty}_\mu}\nonumber\\
   &\leq& C\|(\nabla
   u)^*\|_{B^\frac{N}{2}}\|v^*\|_{\tilde{B}^{\frac{N}{2},\infty}_\mu}\nonumber\\
   &\leq& C\|\nabla
   u\|_{B^\frac{N}{2}}\|v\|_{\tilde{B}^{\frac{N}{2},\infty}_\mu},
    \end{eqnarray*}
where we use the face that $\|\Delta_p
f^*\|_{L^2}=\|\varphi(2^{-p}\xi)|\hat{f}|\|_{L^2_\xi}=
\|\varphi(2^{-p}\xi)\hat{f}(\xi)\|_{L^2_\xi}=\|\Delta_p f\|_{L^2}$.
\end{proof}

Then, we give the definition of the Chemin-Lerner type
spaces.

\begin{defn}
Let  $s\in\mathbb{R}$,
$(r,\lambda,p)\in[1,\,+\infty]^3$ and $T\in]0,\,+\infty]$. We define
$\widetilde{L}^{\lambda}_T(\dot B^s_{p\,r})$ as the completion
of $C([0,T],\mathcal{S})$ by the norm
$$
\| f\|_{\widetilde{L}^{\lambda}_T(\dot{B}^s_{p,r})} :=
\Big(\sum_{q\in\mathbb{Z}}2^{qrs} \Big(\int_0^T\|\Delta_q\,f(t)
\|_{L^p}^{\lambda}\,
dt\Big)^{\frac{r}{\lambda}}\Big)^{\frac{1}{r}} <\infty.
$$
with the usual change if $\lambda=\infty$, $r=\infty.$
\end{defn}

\section{A linear model with convection}\label{vis-S4}
Using the similar argument as that in the proof of Proposition A.1
in  \cite{Danchin2001}, we can obtain  the  following proposition.
\begin{prop} \label{vis-P4.1}
  Let $(p,r)\in[1,+\infty]^2$ and $s\in(-\frac{N}{p}, 1+\frac{N}{p})$, and $v$ be a solenoidal vector field such that $\nabla v\in L^1(0,T;\dot{B}^\frac{N}{p}_{p,r}\cap L^\infty)$. Suppose that
  $f_0\in \dot{B}^s_{p,r}$, $g\in L^1(0,T;\dot{B}^s_{p,r})$ and that $f\in L^\infty(0,T;\dot{B}^s_{p,r})\cap C([0,T];\mathcal{S}')$ solves
    $$\left\{
    \begin{array}{l}
      \partial_t f+\nabla\cdot(vf)=g,\\
      f|_{t=0}=f_0.
    \end{array}\right.
    $$
  Then there exists a constant $C$ depending only on $s$, $p$, $r$ and $N$,   such that the following inequality holds true for $t\in[0,T]$:
    \begin{equation}
      \|f\|_{\widetilde{L}^\infty_t(\dot{B}^s_{p,r})}\leq e^{C\widetilde{V}(t)}\left(
       \|f_0\|_{ \dot{B}^s_{p,r} }+\int^t_0e^{-C\widetilde{V}(t)}\|g(t)\|_{\dot{B}^s_{p,r}}dt
      \right),
    \end{equation}
  with $\widetilde{V}(t)=\int^t_0\|\nabla v\|_{\dot{B}^\frac{N}{p}_{p,r}\cap L^\infty}d\tau$. Moreover,
  if $r<\infty$, then $f\in C([0,T];\dot{B}^\frac{N}{p}_{p,r})$.
\end{prop}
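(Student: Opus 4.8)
The plan is to prove the transport estimate for the advection-diffusion-free equation $\partial_t f + \nabla\cdot(vf) = g$ by applying the dyadic operator $\Delta_q$ to the equation, performing an $L^2$ energy estimate on each localized piece, and then summing the resulting bounds with the correct Besov weights. Since $\nabla\cdot v = 0$ is assumed (the field is solenoidal), one should first rewrite $\nabla\cdot(vf) = v\cdot\nabla f$, so the equation is a genuine transport equation and the energy method will produce cancellation in the leading-order advection term.

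First I would localize: apply $\Delta_q$ to obtain $\partial_t \Delta_q f + v\cdot\nabla \Delta_q f = \Delta_q g + R_q$, where $R_q := v\cdot\nabla\Delta_q f - \Delta_q(v\cdot\nabla f)$ is the standard commutator. Taking the $L^2$ inner product with $\Delta_q f$ and using $\nabla\cdot v = 0$ to kill $\int (v\cdot\nabla\Delta_q f)\,\Delta_q f\,dx = 0$, I get
\begin{equation}
\frac{1}{2}\frac{d}{dt}\|\Delta_q f\|_{L^2}^2 \le \big(\|\Delta_q g\|_{L^2} + \|R_q\|_{L^2}\big)\|\Delta_q f\|_{L^2},
\end{equation}
hence $\frac{d}{dt}\|\Delta_q f\|_{L^2} \le \|\Delta_q g\|_{L^2} + \|R_q\|_{L^2}$. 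Multiplying by the weight $2^{qs}$, summing over $q$ in $\ell^r$ (here $p=2$, but the general $p$ case is identical after replacing the $L^2$ energy estimate by the analogous $L^p$ one via the fact that $\Delta_q$ acts as a nice convolution), and integrating in time yields a bound of the form $\|f\|_{\widetilde L^\infty_t(\dot B^s_{p,r})} \le \|f_0\|_{\dot B^s_{p,r}} + \int_0^t \|g\|_{\dot B^s_{p,r}}\,d\tau + \int_0^t \big\|2^{qs}\|R_q\|_{L^p}\big\|_{\ell^r_q}\,d\tau$.

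The main obstacle, as always for transport estimates in Besov spaces, is controlling the commutator term. The key is the classical commutator estimate (Danchin, Proposition A.1 of \cite{Danchin2001}) asserting
\begin{equation}
\big\|2^{qs}\|R_q\|_{L^p}\big\|_{\ell^r_q} \le C\,\|\nabla v\|_{\dot B^{N/p}_{p,r}\cap L^\infty}\,\|f\|_{\dot B^s_{p,r}},
\end{equation}
valid precisely in the range $s\in(-\tfrac{N}{p}, 1+\tfrac{N}{p})$; this restriction on $s$ is exactly what governs the hypothesis of the proposition, since for such $s$ both the high-high and low-high paraproduct pieces of the commutator can be absorbed. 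Decomposing $R_q$ via Bony's paraproduct and telescoping the $S_{q-1}v$ differences furnishes the $\nabla v$ gain, and summing the three interaction types against the weight $2^{qs}$ produces the stated bound.

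With the commutator controlled, I would set $X(t) := \|f\|_{\widetilde L^\infty_t(\dot B^s_{p,r})}$ and close the argument with Gr\"onwall's lemma applied to the integral inequality
\begin{equation}
\|\Delta_q f(t)\|_{L^p} \le \|\Delta_q f_0\|_{L^p} + \int_0^t\big(\|\Delta_q g\|_{L^p} + \|R_q\|_{L^p}\big)\,d\tau,
\end{equation}
which after weighting and summing gives $X(t)\le \|f_0\|_{\dot B^s_{p,r}} + \int_0^t\|g\|_{\dot B^s_{p,r}}\,d\tau + C\int_0^t \widetilde V'(\tau) X(\tau)\,d\tau$ with $\widetilde V'(\tau) = \|\nabla v(\tau)\|_{\dot B^{N/p}_{p,r}\cap L^\infty}$; Gr\"onwall then yields the claimed exponential factor $e^{C\widetilde V(t)}$. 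The final continuity-in-time assertion for $r<\infty$ follows from a standard approximation argument: the uniform bound gives $f\in L^\infty(\dot B^s_{p,r})$, and since the dyadic pieces are continuous in time with $\ell^r$-summable, uniformly controlled norms, dominated convergence upgrades weak-$*$ continuity to strong continuity in $\dot B^{N/p}_{p,r}$ (and in $\dot B^s_{p,r}$), completing the proof.
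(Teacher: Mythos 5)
Your proposal is correct and follows exactly the route the paper intends: the paper gives no proof of Proposition \ref{vis-P4.1}, simply invoking the argument of Proposition A.1 in \cite{Danchin2001}, and that argument is precisely your localize-with-$\Delta_q$, commute, $L^p$ energy estimate, commutator bound in the range $s\in(-\frac{N}{p},1+\frac{N}{p})$, and Gr\"onwall scheme. The only point worth flagging is that for $p\in\{1,\infty\}$ the $L^p$ energy step is not literally "identical" to $p=2$ and is usually handled via the flow of $v$ or a limiting argument, but this is standard and does not affect the conclusion.
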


\begin{prop}[ \cite{Danchin2003}, Prop. 3.2] \label{vis-P4.2}
  Let $s\in (-\frac{N}{2},2+\frac{N}{2})$, $r\in[1,\infty]$, $u_0$ be a divergence-free vector field with coefficients in $\dot{B}^{s-1}_{2,r}$ and $f\in \widetilde{L}^1_T(\dot{B}^{s-1}_{2,r})$.
  Let $u,v$ be two divergence-free time dependent vector fields  such that $\nabla v\in L^1(0,T;\dot{B}^{\frac{N}{2}}_{2,r}\cap L^\infty)$
   and $u\in C([0,T);\dot{B}^{s-1}_{2,r})\cap
  \widetilde{L}^1_T(\dot{B}^{s+1}_{2,r})$. Assume in addition that
    \begin{equation}
      \left\{\begin{array}{l}
        u_t+v\cdot\nabla u-\mu\Delta u+\nabla \Pi=f,\\
        \nabla \cdot u=0,\\
        u|_{t=0}=u_0,
      \end{array}
      \right.
    \end{equation}
  is fulfilled for some distribution $\Pi$. Then, there exists $C=C(s,r,N)$ such that the following estimate holds true for $t\in[0,T]$:
  \begin{eqnarray}
    &&\|u\|_{\widetilde{L}^\infty_t (\dot{B}^{s-1}_{2,r})}+\mu\|u\|_{\widetilde{L}^1_t(\dot{B}^{s+1}_{2,r})}
    +\|\nabla\Pi\|_{\widetilde{L}^1_t(\dot{B}^{s-1}_{2,r})}\nonumber\\
        &    \leq& e^{C\widetilde{V}(t)}\left(
    \|u_0\|_{\dot{B}^{s-1}_{2,r}}+C\|f\|_{\widetilde{L}^1_T(\dot{B}^{s-1}_{2,r})}
    \right),
  \end{eqnarray}
  where $\widetilde{V}(t)=\int^t_0\|\nabla v\|_{\dot{B}^\frac{N}{2}_{2,r}\cap L^\infty}d\tau$.
\end{prop}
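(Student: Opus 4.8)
The plan is to reduce the problem to a family of dyadic-block energy estimates, exactly as in the standard transport--diffusion theory. Applying $\Delta_q$ to the momentum equation and writing $u_q:=\Delta_q u$, I would first dispose of the pressure: since $\nabla\cdot u_q=\Delta_q\nabla\cdot u=0$, an integration by parts gives $\int_{\mathbb{R}^N}\nabla\Delta_q\Pi\cdot u_q\,dx=-\int_{\mathbb{R}^N}\Delta_q\Pi\,\nabla\cdot u_q\,dx=0$, so the pressure gradient is invisible to the $L^2$ energy estimate on each block. The pressure bound is then recovered separately: taking the divergence of the equation yields $\Delta\Pi=\nabla\cdot(f-v\cdot\nabla u)$, i.e. $\nabla\Pi=(\mathrm{Id}-\mathbb{P})(f-v\cdot\nabla u)$ with $\mathbb{P}$ the Leray projector, a zeroth-order Fourier multiplier bounded on every $\dot{B}^{s-1}_{2,r}$; hence $\|\nabla\Pi\|$ is controlled once $\|f\|$ and $\|v\cdot\nabla u\|$ are.

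For the energy estimate itself, I would multiply the localized equation by $u_q$ and integrate. The dissipation term contributes $-\mu\int\Delta u_q\cdot u_q\,dx\geq \mu\kappa 2^{2q}\|u_q\|_{L^2}^2$, using that $u_q$ is spectrally supported in the annulus $|\xi|\sim 2^q$ (a Bernstein-type lower bound); this is precisely what produces the two-derivative gain $\dot{B}^{s-1}_{2,r}\to L^1_t(\dot{B}^{s+1}_{2,r})$. For the convection term I would use Bony's decomposition to write $\Delta_q(v\cdot\nabla u)=v\cdot\nabla u_q+[\Delta_q,\,v\cdot\nabla]u$; the first piece, tested against $u_q$, vanishes because $\int v\cdot\nabla u_q\cdot u_q\,dx=\tfrac12\int v\cdot\nabla|u_q|^2\,dx=0$ thanks to $\nabla\cdot v=0$. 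One is left with the commutator, which must be controlled, and this is where the hypothesis on $\nabla v$ enters. Collecting terms gives the blockwise inequality
\begin{equation*}
\frac{d}{dt}\|u_q\|_{L^2}+\mu\kappa 2^{2q}\|u_q\|_{L^2}\leq \|\Delta_q f\|_{L^2}+C\,\bigl\|[\Delta_q,\,v\cdot\nabla]u\bigr\|_{L^2}.
\end{equation*}

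The main obstacle is the commutator estimate, and it is precisely what dictates the admissible range of $s$. I would invoke the standard commutator lemma (as in Danchin's work and in the Bahouri--Chemin--Danchin framework), which gives
\begin{equation*}
\Bigl\|\bigl(2^{q(s-1)}\|[\Delta_q,\,v\cdot\nabla]u\|_{L^2}\bigr)_q\Bigr\|_{\ell^r}\leq C\,\|\nabla v\|_{\dot{B}^{N/2}_{2,r}\cap L^\infty}\,\|u\|_{\dot{B}^{s-1}_{2,r}},
\end{equation*}
valid exactly when the regularity index $s-1$ of $u$ lies in $(-\tfrac{N}{2}-1,\,\tfrac{N}{2}+1)$, that is $s\in(-\tfrac{N}{2},\,2+\tfrac{N}{2})$, matching the stated hypothesis. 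This is the step where the range of $s$ cannot be widened and where care is required to keep the constants uniform in $q$.

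Finally I would assemble the pieces in the Chemin--Lerner norm. Integrating the blockwise inequality in time (which keeps the dissipation as an $L^1_t$ term and hence feeds the $\mu\|u\|_{\widetilde{L}^1_t(\dot{B}^{s+1}_{2,r})}$ contribution), multiplying by $2^{q(s-1)}$ and taking the $\ell^r_q$ norm \emph{with the time integral kept inside} as the definition of $\widetilde{L}^\lambda_T$ demands, the commutator bound contributes $C\int_0^t \widetilde{V}'(\tau)\,\|u\|_{\widetilde{L}^\infty_\tau(\dot{B}^{s-1}_{2,r})}\,d\tau$ with $\widetilde{V}'(\tau)=\|\nabla v(\tau)\|_{\dot{B}^{N/2}_{2,r}\cap L^\infty}$. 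A Gronwall argument then absorbs this term and produces the factor $e^{C\widetilde{V}(t)}$, yielding the claimed estimate for all three quantities simultaneously, together with the forcing term $C\|f\|_{\widetilde{L}^1_T(\dot{B}^{s-1}_{2,r})}$. The one subtlety to watch in this last step is that Gronwall cannot be applied inside the $\ell^r$ sum, so the time-integration and summation must be ordered as above before closing the estimate.
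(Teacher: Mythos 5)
The paper does not actually prove this proposition -- it is quoted verbatim from Danchin's 2003 paper (Prop.\ 3.2 there) -- and your sketch correctly reconstructs the standard argument of that reference: dyadic localization, cancellation of the pressure and of $v\cdot\nabla u_q$ against $u_q$ by divergence-freeness, Bernstein's lower bound for the dissipation, the commutator lemma (whose validity range $s-1\in(-\tfrac{N}{2}-1,\tfrac{N}{2}+1)$ indeed dictates the hypothesis on $s$), and a Gronwall argument performed after summing in $\ell^r_q$. The only point I would tighten is the pressure bound: since only $\nabla v$ is assumed controlled, estimating $\|v\cdot\nabla u\|_{\dot{B}^{s-1}_{2,r}}$ directly is not immediate, and one should either write $v\cdot\nabla u=\nabla\cdot(v\otimes u)$ or route the pressure term through the same commutator machinery as the convection term.
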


After the change of function
    $$
    c=\Lambda^{-1}{ \nabla\cdot E},
    $$
the system (\ref{vis-E1.4}) reads (\ref{vis-E1.14}). At first, we
will study the following mixed linear system
    \begin{equation}
  \left\{\begin{array}{l}
    \nabla \cdot v=\nabla \cdot c=0, \ \ x\in \mathbb{R}^N,\ N\geq2,\\
                    v_{it}+u\cdot \nabla v_i+\partial_i p-\mu\Delta v_i-\Lambda c_i=G,\\
                     c_t+u\cdot \nabla c+\Lambda v =L,\\
                (v,c)(0,x)=(v_0,c_0)(x),
  \end{array}
  \right. \label{vis-E4.1}
\end{equation}
where div$u=0$ and $c_0=\Lambda^{-1}\nabla\cdot E_0$. Using the
similar arguments as that in  \cite{Danchin} (Proposition 2.3), we
can obtain the following proposition and omit the details. The main
different is that there is a pressure term $\nabla p$ in
(\ref{vis-E4.1})$_2$. Using that fact that $\nabla\cdot
v=\nabla\cdot c=0$, we have $\int v\cdot\nabla p = \int c\cdot\nabla
p =0$, and obtain the following proposition.
\begin{prop}\label{vis-P4.3}
  Let $(v,c)$ be a solution of (\ref{vis-E4.1}) on $[0,T)$, $1-\frac{N}{2}<\rho\leq 1+\frac{N}{2}$ and
  $\widetilde{U}(t)=\int^t_0\|u(\tau)\|_{B^{\frac{N}{2}+1}}d\tau$. The following estimate holds on $[0,T)$:
    \begin{eqnarray*}
     \|(v,c) \|_{X^\rho_T}\leq Ce^{C\widetilde{U}(T)}\big(&&
      \|c_0\|_{\tilde{B}^{\rho,\infty}_\mu}+\|v_0\|_{B^{\rho-1}}\\
      &&+\int^t_0e^{-C\widetilde{U}(s)}\left(
      \|L(s)\|_{
      \tilde{B}^{\rho,\infty}_\mu}+\|G(s)\|_{
    {B}^{\rho-1} }
      \right)ds
      \big),
    \end{eqnarray*}
where $C$ depends only on $N$ and $\rho$,
    \begin{eqnarray*}
    X^s_T=\big\{
    (v,c)\in && \left(L^1(0,T; {B}^{s+1} )\cap C(0,T; {B}^{s-1} )
    \right)^N \\
    && \times\left(L^1(0,T;\tilde{B}^{s,1}_\mu)\cap C(0,T;\tilde{B}^{s,\infty}_\mu)
    \right)^N
    \big\},
    \end{eqnarray*}
    and $\|(v,c) \|_{X^s_T}=
    \|v\|_{L^\infty_T( {B}^{s-1} )}+\mu\|v\|_{L^1_T( {B}^{s+1} )}
    +\|c\|_{L^\infty_T(\tilde{B}^{s,\infty}_\mu)}+\mu\|c\|_{L^1_T(\tilde{B}^{s,1}_\mu)}$.
\end{prop}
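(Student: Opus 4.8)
The plan is to follow Danchin's strategy for the linearized compressible Navier--Stokes system, adapting it to the divergence--free setting and disposing of the pressure by means of the incompressibility conditions. First I would apply the homogeneous block $\Delta_q$ to (\ref{vis-E4.1}) and set $v_q=\Delta_q v$, $c_q=\Delta_q c$, $p_q=\Delta_q p$, obtaining the localized system
\begin{align*}
\partial_t v_q+u\cdot\nabla v_q+\nabla p_q-\mu\Delta v_q-\Lambda c_q&=\Delta_q G+R_q^1,\\
\partial_t c_q+u\cdot\nabla c_q+\Lambda v_q&=\Delta_q L+R_q^2,
\end{align*}
where $R_q^1=[u\cdot\nabla,\Delta_q]v$ and $R_q^2=[u\cdot\nabla,\Delta_q]c$ are the usual commutators. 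Since $\Delta_q$ commutes with $\nabla\cdot$, both $v_q$ and $c_q$ remain divergence free, so $\langle\nabla p_q,v_q\rangle_{L^2}=-\int p_q\,\nabla\cdot v_q=0$; this is exactly the point where the extra pressure term, absent in Danchin, is eliminated. Likewise $\langle u\cdot\nabla v_q,v_q\rangle_{L^2}=\tfrac12\int u\cdot\nabla|v_q|^2=0$ and similarly for $c_q$, so the transport terms contribute nothing to the leading energy, all the velocity regularity being carried by the commutators.

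Next I would perform two coupled $L^2$ energy estimates at each frequency. Testing the first equation with $v_q$ and the second with $c_q$ and adding, the coupling terms cancel, since $\langle-\Lambda c_q,v_q\rangle+\langle\Lambda v_q,c_q\rangle=0$ by self-adjointness of $\Lambda$, which yields
\[
\tfrac12\tfrac{d}{dt}\bigl(\|v_q\|_{L^2}^2+\|c_q\|_{L^2}^2\bigr)+\mu\|\nabla v_q\|_{L^2}^2 = \langle \Delta_qG+R_q^1,v_q\rangle+\langle \Delta_qL+R_q^2,c_q\rangle .
\]
This controls the parabolic dissipation $\mu 2^{2q}\|v_q\|^2$ of the velocity but sees no damping on $c_q$. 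To recover the weak dissipation on $c$, I would differentiate the cross functional $\langle v_q,\Lambda c_q\rangle$; using the equations its derivative produces a contribution comparable to $\|\Lambda c_q\|^2-\|\Lambda v_q\|^2$, so forming a Lyapunov functional $\mathcal L_q^2:=\|v_q\|^2+\|c_q\|^2-\kappa_q\langle v_q,\Lambda c_q\rangle$ with a frequency-dependent coefficient $\kappa_q$ (chosen small enough, and differently in each regime, that $\mathcal L_q^2\approx\|v_q\|^2+\|c_q\|^2$) gives a genuine damping
\[
\tfrac{d}{dt}\mathcal L_q^2+c_0\,\mu\,\max\{\mu,2^{-q}\}^{-2}\bigl(\|v_q\|^2+\|c_q\|^2\bigr)\le C\bigl(\|\Delta_qG\|+\|R_q^1\|+\|\Delta_qL\|+\|R_q^2\|\bigr)\mathcal L_q .
\]
The decay rate $\mu\max\{\mu,2^{-q}\}^{-2}$ is precisely $\mu 2^{2q}$ in low frequencies and $\mu^{-1}$ in high frequencies, which is why the hybrid weight $\max\{\mu,2^{-q}\}$ of $\widetilde B^{\rho,\cdot}_\mu$ is the natural one: integrating the exponential decay in time converts one power of $L^\infty_T$ (weight $\max\{\mu,2^{-q}\}$) into $\mu$ times $L^1_T$ (weight $\max\{\mu,2^{-q}\}^{-1}$).

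Then I would reduce the inequality to first order, solve it by Gronwall to bound $\mathcal L_q(t)$ in terms of $\mathcal L_q(0)$ and the time-integrals of the source and commutator blocks, multiply by $2^{q(\rho-1)}$ for the velocity part and by the hybrid weight $2^{q\rho}\max\{\mu,2^{-q}\}$ for $c$, and sum over $q\in\mathbb Z$ in $\ell^1$. This reconstitutes $\|(v,c)\|_{X^\rho_T}$ on the left and $\|v_0\|_{B^{\rho-1}}+\|c_0\|_{\widetilde B^{\rho,\infty}_\mu}$ together with $\int_0^t(\|G\|_{B^{\rho-1}}+\|L\|_{\widetilde B^{\rho,\infty}_\mu})$ on the right. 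The commutator sums are handled by the standard estimate $\sum_q 2^{q(\rho-1)}\|[u\cdot\nabla,\Delta_q]w\|_{L^2}\lesssim\|u\|_{B^{\frac N2+1}}\|w\|_{B^{\rho-1}}$ (and its hybrid analogue via Proposition \ref{vis-P3.2} and Lemma \ref{vis-L3.1}), which is exactly what forces the condition $1-\tfrac N2<\rho\le 1+\tfrac N2$; absorbing these terms into the time integral produces the factor $e^{C\widetilde U(T)}$ with $\widetilde U(t)=\int_0^t\|u\|_{B^{\frac N2+1}}\,d\tau$.

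The main obstacle I anticipate is the second step: constructing the frequency-localized Lyapunov functional so that the cross term stays a small perturbation of $\|v_q\|^2+\|c_q\|^2$ while simultaneously extracting the correct damping rate uniformly across the two regimes $2^q\lessgtr\mu^{-1}$, where the coupled mode is oscillatory-dissipative at rate $\mu 2^{2q}$ in one regime and splits into a fast parabolic mode plus a slowly damped mode at rate $\mu^{-1}$ in the other. This two-regime analysis, together with the bookkeeping of the hybrid weights $\max\{\mu,2^{-q}\}$ so that the $L^1_T$ and $L^\infty_T$ norms match after time-integration, is the technical heart; by contrast the commutator estimates are routine once the range of $\rho$ is respected, and the pressure, as noted, disappears by incompressibility.
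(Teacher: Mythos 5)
Your proposal is correct and follows essentially the same route as the paper: the authors prove Proposition \ref{vis-P4.3} by invoking Danchin's Proposition 2.3 verbatim (frequency-localized energy estimates with a cross Lyapunov functional of the type $\|v_q\|^2+\|c_q\|^2-\kappa_q\langle v_q,\Lambda c_q\rangle$ yielding the two-regime damping encoded in the hybrid weight $\max\{\mu,2^{-q}\}$, commutator estimates for the convection term, and Gronwall), adding only the remark that the new pressure term disappears because $\int v\cdot\nabla p=\int c\cdot\nabla p=0$ for divergence-free $v$ and $c$. Your reconstruction of that omitted argument, including the pressure cancellation and the bookkeeping of the hybrid norms, matches the paper's intended proof.
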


\section{Local results}\label{vis-s5}
\subsection{Local existence}
Let $\bar{v}$ be the solution of the linear heat equations,
    \begin{equation}
      \left\{\begin{array}{l}
        \partial_t\bar{v}-\mu\Delta\bar{v}=0,\\
        \bar{v}(0,x)=v_0(x).
      \end{array}
      \right.
    \end{equation}
It is easy to obtain that $\bar{v}\in C([0,T];B^{\frac{N}{2}-1} )$
and
    \begin{equation}
      \|\bar{v}\|_{L^r_T(B^{\frac{N}{2}-1+\frac{2}{r}} )}
      \leq C \|v_0\|_{B^{\frac{N}{2}-1} }
      \ r\in[1,\infty] .
    \end{equation}
    Then, we can choose $T_1\in (0,1)$ such that
    \begin{equation}
      \|\bar{v}\|_{L^1_{T_1}(B^{\frac{N}{2}+1} )}
      \leq \lambda_1,
    \end{equation}
where $\lambda_1$ is chosen in (\ref{vis-E5.7}) and
(\ref{vis-E5.9}).

Let $u=v-\bar{v}$. We will obtain the local existence of the
solution to the following system
\begin{equation}
  \left\{\begin{array}{l}
    \nabla \cdot u=0, \ \ x\in \mathbb{R}^N,\ N\geq 2,\\
        u_{it}+u\cdot \nabla u_i+u\cdot \nabla \bar{v}_i
        +\bar{v}\cdot \nabla u_i+\bar{v}\cdot \nabla \bar{v}_i
        +\partial_i p=\mu\Delta u_i+ E_{jk}\partial_j E_{ik}+\partial_jE_{ij},\\
            E_t+u\cdot \nabla E+\bar{v}\cdot \nabla E=\nabla uE+\nabla \bar{v}E+\nabla u+\nabla \bar{v},\\
                (u,E)(0,x)=( 0,E_0)(x).
  \end{array}
  \right.\label{vis-E4.2}
\end{equation}

Now, we use an iterative method to build approximate solutions
$(u^n,E^n)$ of (\ref{vis-E4.2}) which are solutions of linear
system. Set the first term $(E^0,u^0)$ to $(0,0)$. Then define
$\{(u^n,E^n)\}_{n\in\mathbb{N}}$ by induction.  Choose
$(E^{n+1},v^{n+1})$ as the solution of the following linear system
\begin{equation}
  \left\{\begin{array}{l}
    \nabla \cdot u^{n+1}=0, \ \ x\in \mathbb{R}^N,\ N\geq2,\\
        (u^{n+1})_{it}+(u^{n }+\bar{v})\cdot \nabla (u^{n+1})_i
         +\partial_i p^{n+1}-\mu\Delta (u^{n+1})_i\\
       \ \ \ \ \ \ \ \ \ \   \ \ \ \ \ =-(u^n+\bar{v})\cdot \nabla \bar{v}_i+ E^{n+1}_{jk}\partial_j E^{n+1}_{ik}+\partial_jE^{n+1}_{ij},\\
            E^{n+1}_t+(u^n+\bar{v})\cdot \nabla E^{n+1}=\nabla u^nE^{n}+\nabla \bar{v}E^{n}+\nabla u^n+\nabla \bar{v},\\
                (u^{n+1},E)(0,x)=( 0,E_0)(x).
  \end{array}
  \right.\label{vis-E4.3}
\end{equation}
The existence of the third equation can be obtained by the classical
result of the transport equations. About the second equation's
existence result, we can use the Friedrichs mollifiers. Here, we
omit the details.

    We are going to prove that, if $T\in(0,1)$ is small enough, the following bound holds for all $n\in\mathbb{N}$,
        \begin{equation*}
     \|E^n\|_{\widetilde{L}^\infty_T({B}^{\frac{N}{2}}) }
     \leq 6\|E_0\|_{ {B}^{\frac{N}{2}}},
       \   \|u^n\|_{\widetilde{L}^\infty_T(B^{s-1})}+ \|u^n\|_{L^1_T(B^{s+1})}\leq  \lambda_1.
    \eqno(P_n)
        \end{equation*}
        Suppose that ($P_n$) is satisfied and let us prove that $(P_{n+1})$ is also true.

From Propositions \ref{vis-P3.1} and \ref{vis-P4.1}, we get
    \begin{eqnarray}
     && \|E^{n+1}\|_{\widetilde{L}^\infty_T({B}^{\frac{N}{2}})}\nonumber\\
     &\leq& e^{C\widetilde{V}^n(T)}\left(
       \|E_0\|_{  {B}^{\frac{N}{2} } }+\int^T_0e^{-C\widetilde{V}^n(t)}\|\nabla u^nE^{n}+\nabla \bar{v}E^{n}+\nabla u^n+\nabla \bar{v}\|_{{B}^{\frac{N}{2}}}dt
      \right)\nonumber\\
        &\leq& e^{C\lambda_1}\left(
       \|E_0\|_{{B}^{\frac{N}{2}}}+
       \|(\nabla u^n,\nabla \bar{v})\|_{L^1_T(B^\frac{N}{2})}
       \|E^{n}\|_{{L}^\infty_T({B}^{\frac{N}{2}})}
       + \|(\nabla u^n,\nabla \bar{v})\|_{L^1_T(B^\frac{N}{2})} \right)\nonumber\\
      &\leq&   e^{C\lambda_1}
       \|E_0\|_{ {B}^{\frac{N}{2}} }+2\lambda_1 e^{C\lambda_1} \|E^{n}\|_{\widetilde{L}^\infty_T(\widetilde{B}^{\frac{N}{2},\infty}_\mu)}
      + 2\lambda_1e^{C\lambda_1},
    \end{eqnarray}
where $\widetilde{V}^n(t)=\int^t_0 \|\nabla u^n\|_{B^\frac{N}{2}}
+\|\nabla \bar{v}\|_{B^\frac{N}{2}}d\tau$. When $\lambda_1$
satisfies
    \begin{equation}
      2\lambda_1 e^{C\lambda_1} \leq \frac{1}{2},\  e^{C\lambda_1}\leq 2,
      \    2\lambda_1e^{C\lambda_1}\leq
      \|E_0\|_{ {B}^{\frac{N}{2}}},\label{vis-E5.7}
    \end{equation}
    we have
        $$
        \|E^{n+1}\|_{\widetilde{L}^\infty_T({B}^{\frac{N}{2}})}
        \leq 6\|E_0\|_{{B}^{\frac{N}{2}}}.
        $$

From Propositions \ref{vis-P3.1} and \ref{vis-P4.2}, we have
 \begin{eqnarray}
    &&\|u^{n+1}\|_{\widetilde{L}^\infty_T(B^{\frac{N}{2}-1})}+\mu\|u^{n+1}\|_{L^1_T(B^{\frac{N}{2}+1})}
    +\|\nabla p^{n+1}\|_{L^1_T(B^{\frac{N}{2}+1})}
     \nonumber\\
    &\leq& Ce^{C\widetilde{V}^n(t)} \|-(u^n+\bar{v})\cdot \nabla \bar{v}_i+ E^{n+1}_{jk}\partial_j E^{n+1}_{ik}+\partial_jE^{n+1}_{ij}\|_{L^1_T(B^{\frac{N}{2}-1})}\nonumber\\
        &\leq&Ce^{C\lambda_1} \left(\| (u^n+\bar{v})\|_{L^\infty_T
        (B^{\frac{N}{2}-1})} \|\nabla \bar{v}\|_{L^1_T(B^\frac{N}{2} )}\right.\nonumber\\
        &&\left.+
          T\|E^{n+1}\|_{L^\infty_T(B^{\frac{N}{2} })}^2+ T\| E^{n+1} \|_{L^\infty_T(B^{\frac{N}{2}})}\right)\nonumber\\
          &\leq&Ce^{C\lambda_1} \left(2\lambda^2_1+
          T(6\|E_0\|_{  {B}^{\frac{N}{2} }  })^2+
          6T\|E_0\|_{  {B}^{\frac{N}{2} }  }\right)
          \nonumber\\
            &\leq& \lambda_1,
  \end{eqnarray}
  where we choose $\lambda_1$ and $T$ satisfying
    \begin{equation}
      Ce^{C\lambda_1} \left(2\lambda^2_1+
          T(6\|E_0\|_{ {B}^{\frac{N}{2} }  })^2+
           6T\|E_0\|_{  {B}^{\frac{N}{2} }  }\right)
          \leq  \lambda_1.\label{vis-E5.9}
    \end{equation}
Thus, ($P_n$)    hold for all $n\geq0$. From (\ref{vis-E4.3}), we
can easily obtain that $ u^n_t,E^n_t $ are uniformly bounded in
$L^1_T(B^{\frac{N}{2}-1})$. Then, letting the limit of $\{u^n,
E^n\}$ is $(u,E)$, using the classical compactness arguments, we can
obtain that $(u+\bar{v},E)$ is the solution of (\ref{vis-E4.2}).
Thus, we can prove the existence part of Theorem \ref{vis-T1.1}.

\subsection{Further regularity property}
Let $s\in (\frac{N}{2},\frac{N}{2}+1)$. Under the additional
assumption $E_0\in B^s$ and $v_0\in B^{s-1}$, we obtain that
$\bar{v}\in C([0,T];B^{s-1} )$ and
    \begin{equation}
      \|\bar{v}\|_{L^r_T(B^{s-1+\frac{2}{r}} )}
      \leq C \|v_0\|_{B^{s-1} }
      \ r\in[1,\infty] .
    \end{equation}
   Then, we shall prove that the sequence $\{
(u^n,E^n)\}_{n\in \mathbb{N}}$ is uniformly bounded in $U^s_T$.

Applying Propositions \ref{vis-P3.1} and \ref{vis-P4.1}, and
$(P_n)$, we get
    \begin{eqnarray}
     && \|E^{n+1}\|_{\widetilde{L}^\infty_T({B}^{s})}\nonumber\\
     &\leq& e^{C\widetilde{V}^n(T)}\left(
       \|E_0\|_{  {B}^{s} }+\int^T_0e^{-C\widetilde{V}^n(t)}\|\nabla u^nE^{n}+\nabla \bar{v}E^{n}+\nabla u^n+\nabla \bar{v}\|_{{B}^{s}}dt
      \right)\nonumber\\
        &\leq& e^{C\lambda_1}\left(
       \|E_0\|_{{B}^{s}}+
       \|(\nabla u^n,\nabla \bar{v})\|_{L^1_T(B^\frac{N}{2})}
       \|E^{n}\|_{{L}^\infty_T({B}^{s})}\right.\nonumber\\
       &&\left.+
       \|(\nabla u^n,\nabla \bar{v})\|_{L^1_T(B^s)}
       \|E^{n}\|_{{L}^\infty_T({B}^{\frac{N}{2}})}
       + \|(\nabla u^n,\nabla \bar{v})\|_{L^1_T(B^s)} \right)\nonumber\\
      &\leq&   e^{C\lambda_1}
       \|E_0\|_{ {B}^{s} }+2\lambda_1 e^{C\lambda_1}
       \|E^{n}\|_{\widetilde{L}^\infty_T({B}^s)}+Ce^{C\lambda_1}
       \|v_0\|_{B^{s-1} }(6\|E_0\|_{{B}^\frac{N}{2}} +1)\nonumber\\
        &&+e^{C\lambda_1}
       \|u^n\|_{L^1_T(B^{s+1}) }(6\|E_0\|_{{B}^\frac{N}{2}} +1).\label{vis-E5.11}
    \end{eqnarray}
From Propositions \ref{vis-P3.1} and \ref{vis-P4.2}, we have
 \begin{eqnarray}
    &&\|u^{n+1}\|_{\widetilde{L}^\infty_T(B^{s-1})}
    +\mu\|u^{n+1}\|_{L^1_T(B^{s+1})}
     \nonumber\\
    &\leq& Ce^{C\widetilde{V}^n(t)} \|-(u^n+\bar{v})\cdot \nabla \bar{v}_i+ E^{n+1}_{jk}\partial_j E^{n+1}_{ik}+\partial_jE^{n+1}_{ij}\|_{L^1_T(B^{s-1})}\nonumber\\
        &\leq&Ce^{C\lambda_1} \left(\| (u^n+\bar{v})\|_{L^2_T(B^{\frac{N}{2}})} \|\nabla \bar{v}\|_{L^2_T(B^{s-1} )}
        \right.\nonumber\\
            &&\left.+
          T\|E^{n+1}\|_{L^\infty_T(B^{\frac{N}{2} })}
          \|E^{n+1}\|_{L^\infty_T(B^{s})}+ T\| E^{n+1} \|_{L^\infty_T(B^{s})}\right)\nonumber\\
          &\leq&Ce^{C\lambda_1} \lambda_1\|v_0\|_{B^{s-1}}+
          TCe^{C\lambda_1}\|E^{n+1}\|_{L^\infty_T(B^{s})}
          (6\|E_0\|_{  {B}^{\frac{N}{2} }  }+1).\label{vis-E5.12}
  \end{eqnarray}
From (\ref{vis-E5.11})--(\ref{vis-E5.12}), we have
     \begin{eqnarray}
    &&\|u^{n+1}\|_{\widetilde{L}^\infty_T(B^{s-1})}
    +\mu\|u^{n+1}\|_{L^1_T(B^{s+1})}+
    \frac{\mu\|E^{n+1}\|_{L^\infty_T(B^{s})}}{2e^{C\lambda_1}(6\|E_0\|_{{B}^\frac{N}{2}} +1)}
     \nonumber\\
          &\leq&Ce^{C\lambda_1} \lambda_1\|v_0\|_{B^{s-1}}+
          TCe^{C\lambda_1}\|E^{n+1}\|_{L^\infty_T(B^{s})}
          (6\|E_0\|_{  {B}^{\frac{N}{2} }  }+1)
         \nonumber\\
       && +\frac{ \mu\|E_0\|_{ {B}^{s} }}{2 (6\|E_0\|_{{B}^\frac{N}{2}} +1)}+\frac{C\mu}{2}
       \|v_0\|_{B^{s-1} }       +\frac{\lambda_1\mu }{(6\|E_0\|_{{B}^\frac{N}{2}} +1)}
       \|E^{n}\|_{\widetilde{L}^\infty_T({B}^s)}
        \nonumber\\
            &&+\frac{\mu}{2}
       \|u^n\|_{L^1_T(B^{s+1}) }.\label{vis-E5.13-1}
  \end{eqnarray}
From (\ref{vis-E5.7}), we have
    $ \frac{2\lambda_1 }{(6\|E_0\|_{{B}^\frac{N}{2}} +1)}\leq \frac{1
    }{2e^{C\lambda_1}(6\|E_0\|_{{B}^\frac{N}{2}} +1)}$. When $T_1\in (0,
    T]$ satisfies
    \begin{equation}
    2 T_1Ce^{C\lambda_1}
          (6\|E_0\|_{  {B}^{\frac{N}{2} }  }+1)\leq
             \frac{\mu}{4e^{C\lambda_1}(6\|E_0\|_{{B}^\frac{N}{2}} +1)},
    \end{equation}
we can prove by induction from the inequality (\ref{vis-E5.13-1})
that
     \begin{eqnarray}
    &&\|u^{n}\|_{\widetilde{L}^\infty_{T_1}(B^{s-1})}
    +\frac{\mu}{2}\|u^{n}\|_{L^1_{T_1}(B^{s+1})}+
    \frac{\mu\|E^{n}\|_{L^\infty_{T_1}(B^{s})}}{8e^{C\lambda_1}(6\|E_0\|_{{B}^\frac{N}{2}} +1)}
     \nonumber\\
          &\leq&Ce^{C\lambda_1} \lambda_1\|v_0\|_{B^{s-1}}
          +\frac{ \mu\|E_0\|_{ {B}^{s} }}{2 (6\|E_0\|_{{B}^\frac{N}{2}} +1)}
       +\frac{C\mu}{2}
       \|v_0\|_{B^{s-1} },\ \forall\ n.
  \end{eqnarray}
Now, we conclude that the sequence $\{ (u^n,E^n)\}_{n\in
\mathbb{N}}$ is uniformly bounded in $U^s_{T_1}$. This clearly
enables us to prove that the solution $(v,E)$ built in the previous
subsection also belongs to $U^s_{T_1}$.

From $(v,E)\in U^\frac{N}{2}_T$, we will prove that $(v,E)\in
U^s_T$. Applying Propositions \ref{vis-P3.1} and \ref{vis-P4.1}, we
get
    \begin{eqnarray}
     && \|E\|_{\widetilde{L}^\infty_T({B}^{s})}\nonumber\\
     &\leq& e^{C\widetilde{V}(T)}\left(
       \|E_0\|_{  {B}^{s} }+\int^T_0e^{-C\widetilde{V}(t)}\|\nabla vE+\nabla v\|_{{B}^{s}}dt
      \right)\nonumber\\
        &\leq& C\big(
       \|E_0\|_{{B}^{s}}+
      \int^T_0 \|\nabla v\|_{B^\frac{N}{2}}
       \|E\|_{{B}^{s}}dt+
       \|\nabla v\|_{L^1_T(B^s)}
       \|E\|_{{L}^\infty_T({B}^{\frac{N}{2}})}\nonumber\\
         &&
       + \|\nabla {v}\|_{L^1_T(B^s)} \big)\nonumber\\
         &\leq& C
       \|E_0\|_{{B}^{s}}+C
      \int^T_0 \|\nabla v\|_{B^\frac{N}{2}}
       \|E\|_{{B}^{s}}dt+C
       \|  v\|_{L^1_T(B^{s+1})}.\label{vis-E5.11-0}
    \end{eqnarray}
From Propositions \ref{vis-P3.1} and \ref{vis-P4.2}, we have
 \begin{eqnarray}
    &&\|v\|_{\widetilde{L}^\infty_T(B^{s-1})}
    +\mu\|v\|_{L^1_T(B^{s+1})}
     \nonumber\\
    &\leq& Ce^{C\widetilde{V}(t)}\left(\|v_0\|_{B^{s-1}}+ \|E_{jk}\partial_j E_{ik}+\partial_jE_{ij}\|_{L^1_T(B^{s-1})}
    \right)\nonumber\\
        &\leq&C\|v_0\|_{B^{s-1}}+C\int^T_0 ( \|E\|_{B^{\frac{N}{2} }}
          \|E\|_{B^{s}}+\| E
          \|_{B^{s}})dt\nonumber\\
          &\leq&C\|v_0\|_{B^{s-1}}+C\int^T_0 \| E
          \|_{B^{s}}dt.\label{vis-E5.12-0}
  \end{eqnarray}
From (\ref{vis-E5.11-0})--(\ref{vis-E5.12-0}), we have
     \begin{eqnarray}
    &&\|v\|_{\widetilde{L}^\infty_T(B^{s-1})}
    +\frac{\mu}{2}\|v\|_{L^1_T(B^{s+1})}+
    \frac{\mu\|E\|_{L^\infty_T(B^{s})}}{2C}
     \nonumber\\
          &\leq&C
       \|E_0\|_{{B}^{s}}+C\|v_0\|_{B^{s-1}}+C
      \int^T_0 (\|\nabla v\|_{B^\frac{N}{2}}+1)
       \|E\|_{{B}^{s}}dt.\nonumber
  \end{eqnarray}
Using Gronwall's inequality, we can obtain (\ref{vis-E1.8}).

\subsection{Uniqueness}

Let $(v^1,E^1,p^1)$ and $(v^2,E^2,p^2)$ are solutions of
(\ref{vis-E1.4}) satisfying $E^i\in C([0,T]; B^\frac{N}{2})$,
$v^i\in C([0,T];B^{\frac{N}{2}-1})\cap L^1(0,T; B^\frac{N}{2})$.
From Propositions \ref{vis-P3.1} and \ref{vis-P4.1}, we get
    \begin{eqnarray}
     && \|E^{i}-E_0\|_{\widetilde{L}^\infty_T({B}^{\frac{N}{2}-1})}\nonumber\\
     &\leq& e^{C\widetilde{V}^i(T)}\left(
       \int^T_0 \|\nabla v^iE^{i}-v^i\cdot\nabla E_0+\nabla v^i\|_{{B}^{\frac{N}{2}-1}}dt
      \right)\nonumber\\
        &\leq& C \big(
      \|\nabla v^i\|_{L^1(B^{\frac{N}{2}-1})}\|E^{i}\|_{{L}^\infty_T(
      {B}^{\frac{N}{2}})}+\| v^i\|_{L^1(B^{\frac{N}{2}})}\|
      \nabla E_0\|_{{L}^\infty_T(
      {B}^{\frac{N}{2}-1})}
       \nonumber\\
      &&+\|v^i\|_{L^1(B^\frac{N}{2})}
      \big)\nonumber\\
      &\leq&  C  T^\frac{1}{2}.\label{vis-E5.18}
    \end{eqnarray}
        Let $\tilde{v}$ be the solution of the linear heat equations,
    \begin{equation}
      \left\{\begin{array}{l}
        \partial_t\tilde{v}_l-\mu\Delta\tilde{v}_l=E_{0jk}
        \partial_j E_{0lk}
        +\partial_j E_{0lj},\\
        \tilde{v}(0,x)=0.
      \end{array}
      \right.
    \end{equation}
    It is easy to obtain that
$\tilde{v}\in C([0,T];B^{\frac{N}{2}-1} )$ and
    \begin{equation}
      \|\tilde{v}\|_{L^r_T(B^{\frac{N}{2}-1+\frac{2}{r}} )}
      \leq C \|E_{0jk}
        \partial_j E_{0lk}
        +\partial_j E_{0lj}\|_{L^1_T(B^{\frac{N}{2}-1} )}\leq CT,
      \ r\in[1,\infty] .
    \end{equation}
When $N\geq 3$, from Propositions \ref{vis-P3.1} and \ref{vis-P4.2},
we have
 \begin{eqnarray}
    &&\|v^i-\bar{v}-\tilde{v} \|_{\widetilde{L}^\infty_T(B^{\frac{N}{2}-2})}
    +\mu\|v^i-\bar{v}-\tilde{v} \|_{L^1_T(B^{\frac{N}{2}})}
     \nonumber\\
    &\leq& C  \|-v^i\cdot \nabla (\bar{v}+\tilde{v})_l+
    E^i_{jk}\partial_j E^i_{lk}-E_{0jk}
        \partial_j E_{0lk}+\partial_j(E_{lj}^i-E_{0lj})\|_{L^1_T(B^{\frac{N}{2}-2})}\nonumber\\
        &\leq&C \left(\| v^i\|_{L^\infty(B^{\frac{N}{2}-1})} \|\nabla(
         \bar{v}+\tilde{v})\|_{L^1(B^{\frac{N}{2}-1} )}
          \right.\nonumber\\
            &&\left.+
          T\|(E^i,E_0)\|_{L^\infty(B^{\frac{N}{2} })}\| E^i-E_0 \|_{L^\infty_T(B^{\frac{N}{2}-1})}
          + T\| E^i-E_0 \|_{L^\infty_T(B^{\frac{N}{2}-1})}\right)\nonumber\\
          &\leq&C(T) .\label{vis-E5.21}
  \end{eqnarray}
Let   $\delta v=v^1-v^2$ and $\delta E=E^1-E^2$. Then, we have that
 $$(\delta v,\delta E)\in C([0,T];(B^{\frac{N}{2}-2})^N\times (B^{\frac{N}{2}-1})^{N\times N}
    ).$$
  From (\ref{vis-E1.4}),
we have
\begin{equation}
  \left\{\begin{array}{l}
    \nabla \cdot \delta v=\nabla \cdot \delta E^\top=0,\\
        \delta v_{it}+v^1\cdot \nabla \delta v_i+\delta v\cdot\nabla
        v^2_i
        +\partial_i \delta p\\
            \ \ \ =
        \mu\Delta \delta v_i+ E^1_{jk}\partial_j \delta E_{ik}
        + \delta E_{jk}\partial_j  E^2_{ik}+\partial_j\delta E_{ij},\\
           \delta  E_t+v^1\cdot \nabla \delta E+
           \delta v\cdot\nabla E^2=\nabla v^1\delta E+\nabla \delta v E^2
           +\nabla \delta v,\\
                (\delta v,\delta E)(0,x)=(0,0).
  \end{array} \right.\label{vis-E6.2}
\end{equation}

From Propositions \ref{vis-P3.1} and \ref{vis-P4.1}, we get
    \begin{eqnarray}
     && \|\delta E \|_{\widetilde{L}^\infty_T({B}^{\frac{N}{2}-1})}\nonumber\\
     &\leq& e^{C\widetilde{V}^1(T)}\int^T_0 \|-\delta v\cdot\nabla E^2+\nabla v^1\delta E
     +\nabla \delta v E^2+\nabla \delta v\|_{{B}^{\frac{N}{2}-1}}dt
      \nonumber\\
        &\leq&C\int^T_0(\|\delta v\|_{B^{\frac{N}{2}}}\|
        E^2\|_{B^{\frac{N}{2} }}+
        \|\nabla v^1\|_{B^\frac{N}{2}}\|\delta
        E\|_{{B}^{\frac{N}{2}-1}}+\|\delta v\|_{B^{\frac{N}{2}}})dt
\nonumber\\
      &\leq& C\|\delta v\|_{L^1_T(B^{\frac{N}{2}})}+C\int^T_0
        \|\nabla v^1\|_{B^\frac{N}{2}}\|\delta
        E\|_{{B}^{\frac{N}{2}-1}}dt.\label{vis-E6.3}
    \end{eqnarray}
From Propositions \ref{vis-P3.1} and \ref{vis-P4.2}, we have
 \begin{eqnarray}
    &&\|\delta v \|_{\widetilde{L}^\infty_T(B^{\frac{N}{2}-2})}+\mu\|\delta v \|_{L^1_T(B^{\frac{N}{2}})}
    \label{vis-E6.4-1}\\
    &\leq& Ce^{C\widetilde{V}^1 (T)} \|-\delta v\cdot \nabla v^2_i+ E^1_{jk}\partial_j \delta E_{ik}
    +\delta E_{jk}\partial_j E^2_{ik}+\partial_j\delta E_{ij}\|_{L^1_T(B^{\frac{N}{2}-2})}\nonumber\\
        &\leq&C \int^T_0\left[\|\delta v\|_{B^{\frac{N}{2}-2}}\|\nabla
        v^2\|_{B^\frac{N}{2}}+(\|  E^1\|_{B^{\frac{N}{2}}}
        +\|  E^2\|_{B^{\frac{N}{2}}}+1)\|\delta E\|_{B^{\frac{N}{2}-1}}
      \right]dt . \nonumber
  \end{eqnarray}
From (\ref{vis-E6.3})--(\ref{vis-E6.4-1}), we have
\begin{eqnarray}
    &&\|\delta E \|_{\widetilde{L}^\infty_T({B}^{\frac{N}{2}-1})}+
    \|\delta v \|_{\widetilde{L}^\infty_T(B^{\frac{N}{2}-2})}+\|\delta v \|_{L^1_T(B^{\frac{N}{2}})}
     \label{vis-E6.4}\\
    &\leq& C \int^T_0\left[\|\delta v\|_{B^{\frac{N}{2}-2}}\|\nabla
        v^2\|_{B^\frac{N}{2}}+\|\nabla v^1\|_{B^\frac{N}{2}}\|\delta
        E\|_{{B}^{\frac{N}{2}-1}}+\|\delta E\|_{B^{\frac{N}{2}-1}}
      \right]dt .\nonumber
  \end{eqnarray}
Using Gronwall's inequality, we obtain that $\delta E=\delta v=0$
and finish the proof of uniqueness part of Theorem \ref{vis-T1.1}
when $N\geq 3$.

When $N=2$, using the similar arguments as that in the proof of
(\ref{vis-E5.18})--(\ref{vis-E5.21}), we have that $(\delta v,\delta
E)\in C([0,T]; (\dot{B}^{-1}_{2,\infty})^N\times
(\dot{B}^{0}_{2,\infty})^{N\times N}
    )$. From Propositions \ref{vis-P3.1} and \ref{vis-P4.1}, we get
    \begin{eqnarray*}
     && \|\delta E \|_{\widetilde{L}^\infty_t(\dot{B}^{0 }_{2,\infty})}\nonumber\\
     &\leq& e^{C\widetilde{V}^1(t)}\int^t_0 \|-\delta v\cdot\nabla E^2+\nabla v^1\delta E
     +\nabla \delta v E^2+\nabla \delta v\|_{\dot{B}^{0}_{2,\infty}}dt
      \nonumber\\
        &\leq&C\int^t_0(\|\delta v\|_{B^{1}}\|\nabla
        E^2\|_{\dot{B}^{0 }_{2,\infty}}+
        \|\nabla v^1\|_{B^1}\|\delta
        E\|_{\dot{B}^{0}_{2,\infty}}+\|\delta v\|_{\dot{B}^1_{2,\infty}})dt
\nonumber\\
      &\leq& C\|\delta v\|_{L^1_t(B^1)}+C\int^t_0
        \|\nabla v^1\|_{B^1}\|\delta
        E\|_{\dot{B}^{0}_{2,\infty}}ds,\ t\in[0,T].
    \end{eqnarray*}
Now, inserting the following logarithmic inequality  (see (4.6) in
 \cite{Danchin2003}) in the above estimate,
    \begin{equation}
      \|f\|_{L^1_T(\dot{B}^1_{2,1})}\lesssim \|f\|_{\widetilde{L}^1_T(\dot{B}^1_{2,\infty})}
      \ln(e+\frac{\|f\|_{\widetilde{L}^1_T(\dot{B}^0_{2,\infty})}+\|f\|_{\widetilde{L}^1_T(\dot{B}^2_{2,\infty})}}{
      \|f\|_{\widetilde{L}^1_T(\dot{B}^1_{2,\infty})}}),
    \end{equation}
    we have for $t\in[0,T]$,
        \begin{eqnarray*}
       \|\delta E \|_{\widetilde{L}^\infty_t(\dot{B}^{0 }_{2,\infty})}&\leq& C\int^t_0
        \|\nabla v^1\|_{B^1}\|\delta
        E\|_{\dot{B}^{0}_{2,\infty}}ds\nonumber\\
      & & +C\|\delta v\|_{L^1_t(\dot{B}^1_{2,\infty})}\ln(e+\frac{\|\delta v\|_{\widetilde{L}^1_t(\dot{B}^0_{2,\infty})}+\|\delta v\|_{\widetilde{L}^1_t(\dot{B}^2_{2,\infty})}}{
      \|\delta v\|_{\widetilde{L}^1_t(\dot{B}^1_{2,\infty})}})    .
    \end{eqnarray*}
    Since $$
    \|\delta v\|_{\widetilde{L}^1_t(\dot{B}^0_{2,\infty})}+\|\delta v\|_{\widetilde{L}^1_t(\dot{B}^2_{2,\infty})}
    \leq \sum_{i=1}^2(
        \| v^i\|_{\widetilde{L}^1_t(\dot{B}^0_{2,1})}+\|  v^i\|_{\widetilde{L}^1_t(\dot{B}^2_{2,1})})\leq C_T,
        $$
        we have for $t\in[0,T]$,
        \begin{eqnarray*}
      && \|\delta E \|_{\widetilde{L}^\infty_t(\dot{B}^{0 }_{2,\infty})}\nonumber\\
        &\leq&  C\int^t_0
        \|\nabla v^1\|_{B^1}\|\delta
        E\|_{\dot{B}^{0}_{2,\infty}}ds +C\|\delta v\|_{L^1_t(\dot{B}^1_{2,\infty})}\ln(e+ \frac{C_T }{
      \|\delta v\|_{\widetilde{L}^1_t(\dot{B}^1_{2,\infty})}}),
    \end{eqnarray*}
    and using Gronwall's inequality,
            \begin{equation}
       \|\delta E \|_{\widetilde{L}^\infty_t(\dot{B}^{0 }_{2,\infty})} \leq  C\|\delta v\|_{L^1_t(\dot{B}^1_{2,\infty})}\ln(e+ \frac{C_T }{
      \|\delta v\|_{\widetilde{L}^1_t(\dot{B}^1_{2,\infty})}})
       .\label{vis-E6.3-00}
    \end{equation}
From Propositions \ref{vis-P3.1} and \ref{vis-P4.2}, we have
 \begin{eqnarray}
    &&\|\delta v \|_{\widetilde{L}^\infty_t(\dot{B}^{-1}_{2,\infty})}
    +\mu\|\delta v \|_{L^1_t(\dot{B}^{1}_{2,\infty})}
     \nonumber\\
    &\leq& Ce^{C\widetilde{V}^1 (t)} \|-\delta v\cdot \nabla v^2_i+ E^1_{jk}\partial_j \delta E_{ik}
    +\delta E_{jk}\partial_j E^2_{ik}+\partial_j\delta E_{ij}\|_{L^1_t(\dot{B}^{-1}_{2,\infty})}\nonumber\\
        &\leq&C \int^t_0\left[\|\delta v\|_{\dot{B}^{-1}_{2,\infty}}\|\nabla
        v^2\|_{B^1}+(\|  E^1\|_{B^1}
        +\|  E^2\|_{B^1}+1)\|\delta E\|_{\dot{B}^{0}_{2,\infty}}
      \right]ds \nonumber\\
        &\leq&C \int^t_0\left[\|\delta v\|_{\dot{B}^{-1}_{2,\infty}}\|\nabla
        v^2\|_{B^1}+ \|\delta E\|_{L^\infty_t(\dot{B}^{0}_{2,\infty})}\right]ds.
    \label{vis-E6.4-00}
  \end{eqnarray}
Denote
    $$
    W(t):=\|\delta v \|_{\widetilde{L}^\infty_T(\dot{B}^{-1}_{2,\infty})}
    +\mu\|\delta v \|_{L^1_T(\dot{B}^{1}_{2,\infty})}.
    $$
From (\ref{vis-E6.3-00})--(\ref{vis-E6.4-00}), we have
    $$
    W(t)\leq C\int^t_0 (1+\|\nabla
        v^2\|_{B^1})W(s)\ln(e+\frac{C_T}{W(s)})ds,\ T\in[0,t].
    $$
    As $$
    \int^1_0\frac{dr}{r\ln(e+\frac{C_T}{r})}=\infty,$$
a slight generalization of Gronwall lemma (see e.g. lemma 3.1 in
\cite{Chemin1992}) implies that $W\equiv0$ on $[0,T]$. Thus we
obtain that  $\delta E=\delta v=0$, and finish the proof of
uniqueness part of Theorem \ref{vis-T1.1}
 when $N=2$. {\hfill
$\square$\medskip}

\section{A global existence and uniqueness result}\label{vis-s6}
This section is devoted to the proof of the following theorem.
\begin{thm}\label{vis-T6.1}
  Consider the viscoelastic model (\ref{vis-E1.4}). Suppose that the
   initial data satisfies the incompressible constraints
   (\ref{vis-E1.3-0})--(\ref{vis-E1.3}),
    $E_0\in B^\frac{N}{2}\cap B^{\frac{N}{2}-1}$, $v_0\in   B^{\frac{N}{2}-1}$ and
        \begin{equation}
          \|E_0\|_{B^\frac{N}{2}\cap B^{\frac{N}{2}-1}}+\|v_0\|_{B^{\frac{N}{2}-1}}\leq \lambda,
        \end{equation}
        where $\lambda$ is a small positive constant.
  Then there
  exists a  global   solution for system (\ref{vis-E1.4}) that satisfies
    \begin{equation}
      \|(v,E)\|_{X^\frac{N}{2}}\leq M(\|E_0\|_{B^\frac{N}{2}\cap
      B^{\frac{N}{2}-1}}+\|v_0\|_{B^{\frac{N}{2}-1}}),
    \end{equation}
  where
   \begin{eqnarray*}
    X^s=\big\{
    (v,c)\in &&\left(L^1(\mathbb{R}^+; {B}^{s+1} )\cap C(\mathbb{R}^+; {B}^{s-1} )
    \right)^N\\
        &&\times\left(L^1(\mathbb{R}^+;\tilde{B}^{s,1}_\mu)\cap C(\mathbb{R}^+;\tilde{B}^{s,\infty}_\mu)
    \right)^N
    \big\}.
    \end{eqnarray*}
\end{thm}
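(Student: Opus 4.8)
The plan is to prove global existence by a continuation argument based on the a priori estimate from Proposition \ref{vis-P4.3}, applied to the reformulated system (\ref{vis-E1.14}) in the unknowns $(v,c)$ with $c=\Lambda^{-1}\nabla\cdot E$. The local theory (Theorem \ref{vis-T1.1}) already produces a unique solution on some $[0,T)$; the task is to show that under the smallness hypothesis $\|E_0\|_{B^{N/2}\cap B^{N/2-1}}+\|v_0\|_{B^{N/2-1}}\le\lambda$ the solution norm $\|(v,c)\|_{X^{N/2}_T}$ stays bounded, so that $T$ can be pushed to $+\infty$. First I would record that the linearized estimate of Proposition \ref{vis-P4.3} with $\rho=\tfrac N2$ reads
\begin{equation*}
\|(v,c)\|_{X^{N/2}_T}\le Ce^{C\widetilde U(T)}\Big(\|c_0\|_{\widetilde B^{N/2,\infty}_\mu}+\|v_0\|_{B^{N/2-1}}+\int_0^T e^{-C\widetilde U(s)}\big(\|L\|_{\widetilde B^{N/2,\infty}_\mu}+\|G\|_{B^{N/2-1}}\big)\,ds\Big),
\end{equation*}
with $\widetilde U(T)=\int_0^T\|u\|_{B^{N/2+1}}\,d\tau$ and $u=v$ here. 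The key observation driving the global bound is that $\|c_0\|_{\widetilde B^{N/2,\infty}_\mu}\approx\|c_0\|_{B^{N/2}\cap B^{N/2-1}}\lesssim\|E_0\|_{B^{N/2}\cap B^{N/2-1}}$, so the low-frequency part $B^{N/2-1}$ of $E_0$ is exactly what is needed to close the hybrid-Besov norm, explaining the extra regularity assumption.

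The heart of the argument is estimating the nonlinear source terms
\begin{equation*}
G=E_{jk}\partial_j E_{ik},\qquad L=-[\Lambda^{-1}\nabla\cdot,v\cdot]\nabla E+\Lambda^{-1}\nabla\cdot(\nabla v\,E),
\end{equation*}
and controlling $\widetilde U(T)=\|v\|_{L^1_T(B^{N/2+1})}$, which appears in the exponential. For $G$, the product law in Proposition \ref{vis-P3.1} gives $\|E_{jk}\partial_jE_{ik}\|_{B^{N/2-1}}\lesssim\|E\|_{B^{N/2}}\|\nabla E\|_{B^{N/2-1}}\lesssim\|E\|_{B^{N/2}}^2$, so $\int_0^T\|G\|_{B^{N/2-1}}\lesssim\|E\|_{L^2_T(B^{N/2})}^2$; this is precisely where the $L^2$-in-time decay on $E$ (equivalently the $L^1_T(\widetilde B^{N/2,1}_\mu)$ control inside $X^{N/2}_T$) is indispensable, since a naive $L^\infty_T$ bound on $\|E\|_{B^{N/2}}$ would only yield $T\|E\|_{L^\infty_T}^2$, which does not close globally. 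For the commutator term in $L$, I would invoke Lemma \ref{vis-L3.1} to get $\|[\Lambda^{-1}\nabla,v]\nabla E\|_{\widetilde B^{N/2,\infty}_\mu}\lesssim\|\nabla v\|_{B^{N/2}}\|E\|_{\widetilde B^{N/2,\infty}_\mu}$, and for $\Lambda^{-1}\nabla\cdot(\nabla v\,E)$ the hybrid product estimates of Proposition \ref{vis-P3.2} give a bound of the form $\|\nabla v\|_{B^{N/2}}\|E\|_{\widetilde B^{N/2,\infty}_\mu}$. Integrating in time, both contribute $\lesssim\|v\|_{L^1_T(B^{N/2+1})}\|E\|_{L^\infty_T(\widetilde B^{N/2,\infty}_\mu)}\le\widetilde U(T)\,\|(v,c)\|_{X^{N/2}_T}$.

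Assembling these, and writing $\mathcal E(T)=\|(v,c)\|_{X^{N/2}_T}$, I expect a closed inequality of the schematic shape
\begin{equation*}
\mathcal E(T)\le Ce^{C\mathcal E(T)}\big(\lambda+\mathcal E(T)^2\big),
\end{equation*}
where the quadratic term absorbs $\|E\|_{L^2_T(B^{N/2})}^2\le\mathcal E(T)^2$ and the commutator/convection contributions, and where the exponent $\widetilde U(T)\le\mathcal E(T)$. A standard bootstrap then applies: define $T^*=\sup\{T:\mathcal E(T)\le 2C\lambda\}$; on $[0,T^*)$ the right-hand side is $\le Ce^{2C^2\lambda}(\lambda+4C^2\lambda^2)\le 2C\lambda$ once $\lambda$ is small enough (so $e^{2C^2\lambda}\le 2$ and $4C^2\lambda\le\tfrac12$), which strictly improves the bound and forbids $T^*<\infty$ by continuity of $T\mapsto\mathcal E(T)$ together with the local existence time being bounded below in terms of the (controlled) norm. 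This yields $\mathcal E(\infty)\le M\lambda$ and hence the global solution in $X^{N/2}$ with the asserted estimate; uniqueness is inherited from Theorem \ref{vis-T1.1}, and the higher-regularity claim $s\in(\tfrac N2,\tfrac N2+1)$ follows by propagating the $B^s$ norm linearly along the already-controlled flow, as in the local analysis. The main obstacle I anticipate is not any single estimate but the bookkeeping that keeps every nonlinear contribution either quadratic in $\mathcal E(T)$ or of the form $\widetilde U(T)\mathcal E(T)$ with no bare $\|E\|_{L^\infty_T(B^{N/2})}$ factor multiplying $T$; getting the $L^2_T(B^{N/2})$ dissipation on $E$ out of the hybrid norm (the $\max\{\mu,2^{-q}\}$ weight in $\widetilde B^{s,r}_\mu$) is the delicate point that makes the global, rather than merely local, bound possible.
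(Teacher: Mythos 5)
Your overall architecture is the paper's: apply Proposition \ref{vis-P4.3} to the system in $(v,c)$ with $c=\Lambda^{-1}\nabla\cdot E$, estimate $G$ via the $L^2_T(B^{N/2})$ control of $E$ extracted by interpolation from the hybrid norm (the $\max\{\mu,2^{-q}\}$ versus $\min\{\mu^{-1},2^q\}$ splitting is exactly what the paper does in (\ref{vis-E7.7})), estimate $L$ via Lemma \ref{vis-L3.1} and Proposition \ref{vis-P3.2}, and close by a smallness bootstrap. All of that is correct and matches the paper.

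However, there is one genuine gap: you never explain how to pass from control of $c=\Lambda^{-1}\nabla\cdot E$ back to control of the full tensor $E$. Proposition \ref{vis-P4.3} bounds $\|(v,c)\|_{X^{N/2}_T}$, i.e.\ the norms of $c$, whereas every nonlinear estimate you write ($\|E\|_{L^2_T(B^{N/2})}^2$ in the $G$-term, $\|E\|_{L^\infty_T(\widetilde B^{N/2,\infty}_\mu)}$ in the $L$-term) and the final conclusion itself are phrased in norms of $E$. Since $c$ only records the divergence of $E$, the inequality $\|E\|\lesssim\|c\|$ is false in general, so your schematic inequality $\mathcal E(T)\le Ce^{C\mathcal E(T)}(\lambda+\mathcal E(T)^2)$ with $\mathcal E(T)=\|(v,c)\|_{X^{N/2}_T}$ does not by itself control the quantities you feed into it. The paper's resolution is the elliptic identity $\Delta E_{ij}=\Lambda\partial_j c_i+\partial_k(\partial_k E_{ij}-\partial_j E_{ik})$ combined with the compatibility condition (\ref{vis-E1.3}), which Lemma \ref{vis-L2.3} propagates in time and which rewrites the curl-type remainder as $\partial_l(E_{lj}E_{im}-E_{lm}E_{ij})$, a quadratic expression in $E$; this yields $\|E\|\le C\|c\|+C\|E\|^2$ in the hybrid norms, hence $\|E\|\le C\|c\|+CM^2\alpha^2$ under the bootstrap hypothesis, closing the loop. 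This step is the sole reason the global theorem assumes (\ref{vis-E1.3}) while the local one does not; your proposal neither uses that hypothesis nor supplies any substitute for it, so as written the bootstrap cannot be closed.
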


Denote
    $$
    \alpha=\|E_0\|_{\tilde{B}^{\frac{N}{2},\infty}_\mu}+\|v_0\|_{B^{\frac{N}{2}-1}}.
    $$
From  Theorem \ref{vis-T1.1}, we have that there exists a unique
local solution $(v,E)$ of (\ref{vis-E1.4}) with the initial data
$(v_0,E_0)$. Assume that the maximum existence time is $T^*$, such
that the solution $(v,E)$ exists on  $ [0,T^*)$ and
    $$
    (v,E)\in  \left(L^1([0,T^*); B^{\frac{N}{2}+1})
    \cap C([0,T^*);   B^{\frac{N}{2}-1})
    \right)^N\times\left(
C([0,T^*); B^\frac{N}{2})
    \right)^{N\times N}.
    $$
Using Proposition \ref{vis-P3.1}, we can easily obtain that $E\in
\left( C([0,T^*); B^{\frac{N}{2}-1})
    \right)^{N\times N}$ and omit the details.
From Lemmas \ref{vis-L2.2}--\ref{vis-L2.3}, we have
    \begin{equation}
    \partial_m E_{ij}-\partial_jE_{im}
       =E_{lj}\partial_lE_{im}-E_{lm}\partial_lE_{ij}
       =\partial_l(E_{lj}E_{im}-E_{lm}E_{ij}).\label{vis-E7.1}
  \end{equation}

 We are going to prove the existence of a positive
$M$ such that, if $\alpha$ is small enough, the following bound
holds,
        \begin{equation}
          \|(v,E)\|_{X^\frac{N}{2}_{T^*}}\leq
          M\alpha.\label{vis-E7.2-0}
        \end{equation}

\textbf{Claim 1.} If
        \begin{equation}
          \|(v,E)\|_{X^\frac{N}{2}_{T}}\leq 2M\alpha,\
          T\in(0,T^*),\label{vis-E7.2}
        \end{equation}
then, we have
    \begin{equation}
          \|(v,E)\|_{X^\frac{N}{2}_{T}}\leq M\alpha.
        \end{equation}
when $\alpha$ is small enough.

 Let $c=\Lambda^{-1}\nabla\cdot E$, we
have
   \begin{equation}
  \left\{\begin{array}{l}
    \nabla \cdot v=\nabla \cdot c=0,  \\
                               v_t+v\cdot \nabla v+\nabla p-\mu\Delta v-\Lambda c=G,\\
                                c_t+v\cdot \nabla c+\Lambda v =L,\\
                     \Delta E_{ij}= \Lambda\partial_j c_i+\partial_k(\partial_k E_{ij}-\partial_jE_{ik }),\\
                (v,c)(0,x)=(v_0,c_0)(x),
  \end{array}
  \right.\label{vis-E5.1}
\end{equation}
with
    $$
    L_i=\Lambda^{-1}\partial_j(\nabla v E)_{ij}-[\Lambda^{-1}\partial_j, v\cdot]\nabla E_{ij},
    \ G_{i}=E_{jk}\partial_jE_{ik}.
    $$
From Proposition \ref{vis-P4.3}, we have
    \begin{eqnarray}
       \|(v,c) \|_{X^\frac{N}{2}_T}
       &\leq& Ce^{C\|v\|_{L^1_T(B^{\frac{N}{2}+1})}}\left(
      \|c_0\|_{\tilde{B}^{\frac{N}{2},\infty}_\mu}
      +\|v_0\|_{B^{\frac{N}{2}-1}}\right.\nonumber\\
        &&\left.+
      \|L(s)\|_{L^1_T(\tilde{B}^{\frac{N}{2},\infty}_\mu)}+
      \|G(s)\|_{L^1_T(
      \tilde{B}^{\frac{N}{2}-1}) }
      \right).\label{vis-E7.5}
    \end{eqnarray}
From  Proposition \ref{vis-P3.2},  Lemma \ref{vis-L3.1}  and
(\ref{vis-E7.2}), we obtain
    \begin{eqnarray}
\|L\|_{L^1_T(\tilde{B}^{\frac{N}{2},\infty}_\mu)}
&\leq&\|\Lambda^{-1}\partial_j(\nabla v
E)_{ij}\|_{L^1_T(\tilde{B}^{\frac{N}{2},\infty}_\mu)}
   +\|[\Lambda^{-1}\partial_j, v\cdot]\nabla E_{ij}\|_{L^1_T(\tilde{B}^{\frac{N}{2},\infty}_\mu)}
   \nonumber\\
&    \leq & C \| \nabla v E
\|_{L^1_T(\tilde{B}^{\frac{N}{2},\infty}_\mu)}
            +C \|E\|_{L^\infty_T(\tilde{B}^{\frac{N}{2},\infty}_\mu)}\|\nabla v\|_{L^1_T
                (B^\frac{N}{2})}\nonumber\\
                &\leq&C \|E\|_{L^\infty_T(\tilde{B}^{\frac{N}{2},\infty}_\mu)}\|\nabla v\|_{L^1_T
                (B^\frac{N}{2})}\leq CM^2\alpha^2.
    \end{eqnarray}
 From Proposition \ref{vis-P3.1} and (\ref{vis-E7.2}), we get
        \begin{eqnarray}
        && \|G(s)\|_{L^1_T(
      \tilde{B}^{\frac{N}{2}-1}) }
      \leq  C\|E\|_{L^2_T(B^\frac{N}{2})}\|\nabla E\|_{L^2_T(B^{\frac{N}{2}-1})}
      \leq C\|E\|_{L^2_T(B^\frac{N}{2})}^2
      \nonumber\\
        &=&C\int^T_0\left(\sum_{q\in\mathbb{Z}}\left(2^{q\frac{N}{2}}\|\Delta_q E\|_{L^2}\max\{\mu,2^{-q}\}
        \right)^\frac{1}{2}\right.\nonumber\\
        &&\times\left.\left(2^{q\frac{N}{2}}\|\Delta_q E\|_{L^2}\min\{\mu^{-1},2^{ q}\}
        \right)^\frac{1}{2}
        \right)^2dt\nonumber\\
            &\leq& C\|E\|_{L^\infty_T(\tilde{B}^{\frac{N}{2},\infty}_\mu)}
            \|E\|_{L^1_T(\tilde{B}^{\frac{N}{2},1}_\mu)}\leq CM^2\alpha^2.\label{vis-E7.7}
        \end{eqnarray}
 From (\ref{vis-E7.5})--(\ref{vis-E7.7}), we have
    \begin{eqnarray}
       \|(v,c) \|_{X^\frac{N}{2}_T}
       &\leq& Ce^{CM\alpha}\left(
      \|E_0\|_{\tilde{B}^{\frac{N}{2},\infty}_\mu}
      +\|v_0\|_{B^{\frac{N}{2}-1}}+
     CM^2\alpha^2
      \right).\label{vis-E7.9}
    \end{eqnarray}

From (\ref{vis-E7.1}) and (\ref{vis-E5.1})$_4$, we have
    \begin{eqnarray}
      \|E\|_{L^\infty_T(\widetilde{B}^{\frac{N}{2},\infty}_\mu)}&\leq&
      C\|c\|_{L^\infty_T(\widetilde{B}^{\frac{N}{2},\infty}_\mu)}+
      C\|E^2\|_{L^\infty_T(\widetilde{B}^{\frac{N}{2},\infty}_\mu)}\nonumber\\
    &\leq& C \|c\|_{L^\infty_T(\widetilde{B}^{\frac{N}{2},\infty}_\mu)}+
      C\|E\|_{L^\infty_T(B^{\frac{N}{2}})}
      \|E\|_{L^\infty_T(\widetilde{B}^{\frac{N}{2},\infty}_\mu)}\nonumber\\
    &\leq& C \|c\|_{L^\infty_T(\widetilde{B}^{\frac{N}{2},\infty}_\mu)}+
     CM^2\alpha^2
    \end{eqnarray}
and
    \begin{eqnarray}
      \|E\|_{L^1_T(\widetilde{B}^{\frac{N}{2},1}_\mu)}&\leq&
      C\|c\|_{L^1_T(\widetilde{B}^{\frac{N}{2},1}_\mu)}+
      C\|E^2\|_{L^1_T(\widetilde{B}^{\frac{N}{2},1}_\mu)}\nonumber\\
      &\leq&
      C\|c\|_{L^1_T(\widetilde{B}^{\frac{N}{2},1}_\mu)}+
      C\|E^2\|_{L^1_T(B^{\frac{N}{2}})}\nonumber\\
    &\leq& C \|c\|_{L^1_T(\widetilde{B}^{\frac{N}{2},1}_\mu)}+
      C\|E\|_{L^2_T(B^{\frac{N}{2}})}^2\nonumber\\
    &\leq& C \|c\|_{L^1_T(\widetilde{B}^{\frac{N}{2},1}_\mu)}+
      CM^2\alpha^2.\label{vis-E7.11}
    \end{eqnarray}
From (\ref{vis-E7.9})--(\ref{vis-E7.11}), we have
    \begin{eqnarray}
       \|(v,E) \|_{X^\frac{N}{2}_T}
       &\leq& Ce^{CM\alpha}\left(
      \|E_0\|_{\tilde{B}^{\frac{N}{2},\infty}_\mu}
      +\|v_0\|_{B^{\frac{N}{2}-1}}+
     CM^2\alpha^2
      \right)+CM^2\alpha^2\nonumber\\
        &\leq& 2C\alpha+ \alpha\leq M\alpha.
    \end{eqnarray}
when $M=2C+1$ and $\alpha$ satisfies
    \begin{equation}
e^{CM\alpha}\leq 2,\ 2C^2M^2\alpha+CM^2\alpha\leq 1.
    \end{equation}
Then, we finish the proof of Claim 1. From the classical
continuation method and Claim 1, we can easily obtain that
(\ref{vis-E7.2-0}). Combining Theorem \ref{vis-T1.1}, one can obtain
that $T^*=\infty$ and finish the proof of Theorem \ref{vis-T1.2}.  {\hfill
$\square$\medskip}

\end{document}